\date{\today}
\let\oldsection\section
\renewcommand\section{\setcounter{equation}{0}\oldsection}
\newtheorem{corollary}{Corollary}[section]
\newtheorem{theorem}{Theorem}[section]
\newtheorem{lemma}{Lemma}[section]
\newtheorem{proposition}{Proposition}[section]
\newtheorem{remark}{Remark}[section]
\begin{document}
\title[Global well-posedness of inviscid resistive isentropic MHD]{Global well-posedness of the inviscid resistive isentropic compressible MHD system}
\author{Jinkai Li}
\address{Jinkai Li, South China Research Center for Applied Mathematics and Interdisciplinary Studies, School of Mathematical Sciences, South China Normal University, Zhong Shan Avenue West 55, Guangzhou 510631, P. R. China}
\email{jklimath@m.scnu.edu.cn; jklimath@gmail.com}
\author{Liening~Qiao$^\ast$}
\address[Liening~Qiao]{School of Mathematical Sciences, South China Normal University, Zhong Shan Avenue West 55, Guangzhou 510631, P. R. China}
\email{l\_n\_qiao@163.com}
\keywords{Inviscid compressible MHD, global well-posedness, Diophantine condition, stabilizing mechanism.}
\subjclass[2020]{35Q35, 76D09, 76N10, 76W05}
\allowdisplaybreaks
\maketitle
\footnotetext{$^\ast$Corresponding author.}

\allowdisplaybreaks

\begin{abstract}
Due to the absence of dissipation mechanism to the inviscid compressible systems, it is a challenging problem to
prove their global solvability.
In this paper, we are concerned with the initial-boundary value problem to the inviscid and resistive isentropic compressible
magnetohydrodynamic (MHD) system on three dimensional torus $\mathbb T^3$. Global well-posedness and large time behavior of solutions are
established in the first time for the isentropic setting, under the condition that the initial data $(\rho_0, u_0, H_0)$ is a small
perturbation around the constant state $(1, 0, w)$, with $w$ satisfying the Diophantine condition.
The main observation of this paper is that the spatial derivatives of the density along
directions perpendicular to $w$ are dissipated. Such dissipation mechanism
is generated from the interaction between the velocity field and the
background magnetic field. This verifies the weak stabilizing effects of the magnetic filed on the dynamics in the scenario
of inviscid isentropic flows. Due
to different dissipation mechanisms for the density, velocity, and magnetic field, three ties of dissipative energies are designed, that is,
high order Sobolev norms of the perturbed magnetic field, intermediate order Sobolev norms of the perturbed
density, and low order Sobolev norms of the velocity field.
\end{abstract}



\section{Introduction and Main Results}
\label{sec1}
The magnetohydrodynamic (MHD) model is a system describing the dynamics of conducting fluid under the effect of the magnetic field and finds its way in a huge range of physical
 objects. Mathematically, the compressible MHD system consists of the compressible Navier-Stokes equations of fluid dynamics and the Maxwell equations of electromagnetism (see, e.g., \cite{Cabannes-H,Landau-Lifshitz,Polovin-Demutskii}), that is
\begin{equation}\label{MHD-begin}
\left\{
\begin{array}{rl}
&\hspace{-0,5cm}\partial_t\rho+\textrm{div}(\rho u)=0,
\smallskip\\
&\hspace{-0,5cm}\rho(\partial_t u
+(u\cdot \nabla)u)
-\mu\Delta u-(\mu+\lambda)\nabla\textrm{div}u+\nabla P=(H\cdot\nabla)H,\quad P:=p+\frac{|H|^2}{2},
\medskip\\
&\hspace{-0,5cm}\partial_t H+(u\cdot \nabla)H-\nu\Delta H
=(H\cdot\nabla)u-H\textrm{div}u,
\medskip\\
&\hspace{-0,5cm}{\mathop{\rm div}}\, H=0,
\end{array}
\right.
\end{equation}
where $\rho\in\mathbb R^+$,  $u\in\mathbb R^3$, $H\in\mathbb R^3$ and $p=p(\rho)$ describe the density, velocity field, magnetic field, and
pressure, respectively, while $\mu$ and $\lambda$ are constant viscous coefficients, and $\nu$ is the the constant resistivity coefficient.

Due to its wide applications in physics and mathematical importance, the study of the MHD equations has been attracting many mathematicians
over the past decades and a lot of important results have been achieved.
In the case that with both viscosity and resistivity, that is $\mu>0$, $2\mu+\lambda>0,$ and $\nu>0,$ mathematical results for system
\eqref{MHD-begin} are similar to those of the compressible Navier-Stoles equations, see, e.g., \cite{Abidi-Paicu,CGZ,Xu-Qiao-Fu} for the
incompressible fluids and \cite{KawashimaS,SuenA,Gao-Wu-Xu} for the compressible fluids.

In the case that with only viscosity but without resistivity, that is $\mu>0$ and
$2\mu+\lambda>0$, but $\nu=0$, system \eqref{MHD-begin} becomes
the viscous non-resistive compressible MHD equations. In this case, the first result concerning global well-posedness of strong solutions was
established by Lin--Zhang \cite{Lin-Zhang} for the incompressible fluid, under the conditions that the fluid has low initial energy and that
the initial magnetic field is a small perturbation of the background $(0,0,1)$ (or $(0,1)$ in 2D case). For the spatially periodic initial
boundary value problem, global well-posedenss of low energy strong solutions was also established when the background magnetic field $w$
satisfies the so called Diophantine condition (see \eqref{Diophantine0}, in the below). The first result along this direction was obtained by
Chen--Zhang--Zhou \cite{Chen-Zhang-Zhou}, see Zhai \cite{ZhaiX}, Xie--Jiu--Liu \cite{Xie-Jiu-Liu} and Jiang--Jiang \cite{Jiang-Jang1} for
further developments. Global well-posedness results were also established for the compressible fluids, see \cite{Wu-Wu,Dong-Wu-Zhai} for the
Cauchy problems, \cite{Ren-Xiang-Zhang,Tan-Wang,ZhaoY} for the problem on infinite strip, and \cite{Jiu-Liu-Xie,Wu-Zhai,Wu-Zhu,Li-Xu-Zhai} for
the problem on torus. In particular, for the spatially periodic initial boundary value problem considered in
\cite{Jiu-Liu-Xie,Wu-Zhai,Li-Xu-Zhai}, the background magnetic field $w$ was assumed to satisfy the Diophantine condition. Recently,
Huang-Xin-Yan \cite{Huang-Xin-Yan} investigated the finite-time blow-up of solutions to the viscous non-resistive compressible MHD
equations with vacuum states. It was demonstrated that when the initial vacuum region contains a ball, the radial symmetric strong solution to
the initial-boundary value problem blows up in finite time.

In contrast to the cases stated in the above two paragraphes, not many results are currently available for the inviscid and resistive MHD
system \eqref{MHD}, i.e., $\mu=\lambda=0$ and $\nu>0$. In this case, most results considered the incompressible fluid.  For the homogeneous
incompressible fluid, global existence of weak solutions was established in \cite{Cao-Wu,Lei-Zhou}, see also
\cite{Hassainia,Jiu-Niu-Wu-Yu,Cao-Wu-Yuan,Jiu-Zhao,QiaoY,Ye-Yin} for some related results. Regarding the spatially periodic initial boundary
value problem, global well-posedness of strong solutions was established in \cite{Wei-Zhang,Chen-Zhang-Zhou,Xie-Jiu-Liu,ZhaiX,Zhou-Zhu}, where
\cite{Wei-Zhang} deals with the case with zero magnetic background, \cite{Chen-Zhang-Zhou,Xie-Jiu-Liu,ZhaiX,Zhou-Zhu} consider nonzero
magnetic background, while \cite{Zhou-Zhu} assumes some symmetry condition. For the plasma-vacuum interface problem, global well-posedness was
established by Wang--Xin \cite{Wang-Xin1}. Compared with the incompressible case, there are very few results on the global well-posedness
for the compressible system. Local well-posedness of the interface problem was proved by Zhang \cite{ZhangJ}. Concerning the global
well-posedness, we are aware of the results by Wang--Xin \cite{Wang-Xin} and Wu--Xu--Zhai \cite{Wu-Xu-Zhai}.
For a small perturbed initial
magnetic field around a non-horizontal constant background magnetic field, Wang--Xin \cite{Wang-Xin} established global well-posedness for
the heat conducting flow in a three dimensional strip. Very recently, still for the heat conducting flow,
Wu--Xu--Zhai \cite{Wu-Xu-Zhai} considered global well-posedness in the case on $\mathbb T^3$,
where the background constant magnetic field is assumed to satisfy the Diophantine condition.
The arguments in \cite{Wang-Xin} and \cite{Wu-Xu-Zhai} crucially depend on the heat conducting dissipation
mechanism and thus do not apply to the non-heat conducting flow or the isentropic flow.

In this paper, we are concerned with the following inviscid and resistive isentropic MHD system  in 3D periodic domain $\mathbb T^3:=[0,1]^3$,
\begin{equation}\label{MHD}
\left\{
\begin{array}{rl}
&\hspace{-0,5cm}\partial_t\rho+\textrm{div}(\rho u)=0,\ (x,t)\in\mathbb T^3\times\mathbb R,
\smallskip\\
&\hspace{-0,5cm}\rho(\partial_t u
+(u\cdot \nabla) u)
+\nabla P=(H\cdot \nabla) H,\quad P:=p+\frac{|H|^2}{2},
\medskip\\
&\hspace{-0,5cm}\partial_t H+(u\cdot \nabla) H-\nu\Delta H
=(H\cdot \nabla) u-H\textrm{div}u,
\medskip\\
&\hspace{-0,5cm}{\mathop{\rm div}}\, H=0,
\medskip\\
&\hspace{-0,5cm}(\rho,u,H)_{|t=0}=(\rho_0,u_0,H_0),
\end{array}
\right.
\end{equation}
where $p=p(\rho)$ with $p(\cdot)$ being a smooth function satisfying $p'>0$ in $(0,\infty)$.
For simplicity, we set $\nu=1$,
as its exact value does not affect the mathematical analysis.

Recall that in the heat conducting case, i.e., for the inviscid and resistive heat conducting MHD system, the corresponding global
well-posedness of solution was recently established in \cite{Wang-Xin,Wu-Xu-Zhai}, where the main observation there is that the heat conduction produces
the dissipation of the divergence and further the full dissipation of the velocity. Unfortunately, such dissipation mechanism generated by the
heat conduction is not expectable to the isentropic flow due to the absence of the
temperature equation in the isentropic system (\ref{MHD}). Therefore, some new dissipation mechanism is required. To our best
knowledge, global well-posedness result to system (\ref{MHD}) is not available in the existing literature. The aim of this paper is address the global well-posedness of solutions around nonzero constant magnetic background.

Given a nonzero constant vector $w\in\mathbb R^3$. We call that $w$ satisfies the Diophantine condition, if there are two positive constants $c$ and $r>2$ such that
\begin{equation}
\label{Diophantine0}
|w\cdot k|\geq \frac{c}{|k|^r},\quad \forall k\in\mathbb{Z}^3\backslash{\{0\}}.
\end{equation}
As shown in \cite{Chen-Zhang-Zhou}, the Diophantine condition is satisfied for almost
all $w\in\mathbb R^3.$ The main advantage of Diophantine condition is that it provides a Poincar\'{e} type inequality with the price of
derivative loss,
see Lemma \ref{Diophantine-inequality}, Lemma \ref{Diophantine-inequality0}, and Corollary \ref{CorPoincare}, in the below. Set
$$
h:=H-w\quad\text{and}\quad h_0:=H_0-w.
$$
Then, \eqref{MHD} can be rewritten as
\begin{equation}\label{MHD1}
\left\{
\begin{array}{rl}
&\hspace{-0,5cm}\partial_t\rho+\textrm{div}(\rho u)=0,
\smallskip\\
&\hspace{-0,5cm}\rho(\partial_t u
+(u\cdot \nabla) u)
+\nabla p=(w\cdot \nabla) h+(h\cdot \nabla) h-\nabla(w\cdot h)-\frac12\nabla(|h|^2),
\medskip\\
&\hspace{-0,5cm}\partial_t h+(u\cdot \nabla) h-\Delta h
=(w\cdot \nabla) u+(h\cdot \nabla) u-w\textrm{div}u-h\textrm{div}u,
\medskip\\
&\hspace{-0,5cm}{\mathop{\rm div}}\, h=0,
\medskip\\
&\hspace{-0,5cm}(\rho,u,h)_{|t=0}=(\rho_0,u_0,h_0).
\end{array}
\right.
\end{equation}

Set $\Lambda=\sqrt{-\Delta}$ and denote by $L^q(\mathbb T^3)$ and $H^k(\mathbb T^3)$ with $q\geq1$ and $k\geq0$ the usual Lebesgue and Sobolev
spaces on $\mathbb T^3$ with norms $\|\cdot\|_{L^q}$ and $\|\cdot\|_k$, and set $H^0(\mathbb T^3)=L^2(\mathbb T^3)$. For simplicity, we use
the notation $``f\lesssim g"$ to represent $``f\leq Cg"$ for a general
constant $C$. For simplicity, we adopt
\begin{eqnarray*}
&\int fdx=\int_{\mathbb T^3}fdx,\quad\|f,g\|^2_{X}=\|f\|^2_{X}+\|g\|^2_{X},\quad\|h(f,g)\|^2_{X}=\|hf,hg\|^2_{X},
\end{eqnarray*}
and commutator
$$
[U,V]W:=U(VW)-V(UW).
$$

Set
\begin{equation*}\label{alpha}
\alpha_1:=\min\left\{\inf_{s\in(\frac12,\frac32)}\frac{p'(s)}{s},\frac12\right\},\ \ \alpha_2:=\max\left\{\sup_{s\in(\frac12,\frac32)}\frac{p'(s)}{s},\frac32\right\},
\end{equation*}
and
$$
Q:=\sup_{0\leq k\leq
N+d+3,s\in(\frac12,\frac32)}|p^{(k)}(s)|,\quad\beta:=p'(1).
$$
For any positive integer $j$, denote
\begin{eqnarray}
\label{Ej}
  \mathcal E_j(t):=\|a, u, h\|_j^2(t),\quad\mathcal E_{j,0}:=\mathcal E_{j}(0), \label{EN0}
\end{eqnarray}
where
\begin{equation}
  \label{a}
  a:=\rho-1.
\end{equation}
Assume that
\begin{equation}\label{junzhi0}
\begin{split}
\int \rho_0dx=1,\quad \int \rho_0u_0dx=\int h_0dx=0,
\end{split}
\end{equation}
then it follows
\begin{equation}\label{junzhi1}
\begin{split}
\int \rho dx=1,\quad \int\rho u dx=\int h dx=0.
\end{split}
\end{equation}

We are now in the position to state the main result of this paper.

\begin{theorem}\label{th:main1}
Given arbitrary constant vector $w\in\mathbb R^3$ satisfying the Diophantine condition (\ref{Diophantine0})
for some positive constants $c$ and $r$. Let $L, M, N, d$ be any given positive integers such that
\begin{equation*}
  r+3\leq L\leq M-r-1,\quad M\leq N-r-2,\quad\mbox{ and }\quad d>2(N+r-L).
\end{equation*}
Assume that the initial datum $(\rho_0,u_{0},h_0)\in H^{N+d}$ satisfies (\ref{junzhi0}) and $\inf_{x\in\mathbb T^3}\rho_0(x)$.

Then, there is a positive constant $\varepsilon_0$ depending only on $N$, $d$, $L$, $|w|$, $r$, $c$, $\beta,$ $\alpha_1$, $\alpha_2,$ and $Q$,
such that the following two
items hold:

(i) System \eqref{MHD1}
admits a unique global solution $(\rho, u, h)\in C(\mathbb[0,\infty);H^{N+d})$,
as long as
\begin{equation*}
  \mathcal E_{N,0}\left(1+\mathcal E_{N+d,0}^{\frac{2(N+r-L)}{d-2(N+r-L)}}\right)\leq\varepsilon_0,
\end{equation*}
where $\mathcal E_{N,0}$ and $\mathcal E_{N+d,0}$ are given as in (\ref{Ej}).

(ii) Moreover, the unique global solution obtained in (i) satisfies
 \begin{eqnarray*}
 && \frac34\leq\rho\leq\frac45\quad\mbox{on }\mathbb T^3\times[0,\infty), \quad \mathcal E_{N+d}(t)\leq C_0\mathcal E_{N+d,0},\\
&&\mathcal E_{N}(t)\leq C_0\mathcal E_{N,0}\left[1+
  \left(\frac{\mathcal E_{N,0}}{ \mathcal E_{N+d,0}}\right)^{\frac{N+r-L}{d}} t\right]^{-\frac{d}{N+r-L}},
\end{eqnarray*}
for $t\in[0,\infty)$, where $C_0$ is a positive constant depending on depending only on $N$, $d$, $L$, $|w|$, $r$, $c$, $\beta,$ $\alpha_1$, $\alpha_2,$ and $Q$, and $\mathcal E_N(t)$ and $\mathcal E_{N+d}(t)$ are defined as
in (\ref{Ej}).
\end{theorem}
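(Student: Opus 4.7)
My plan is to combine a standard local existence result with uniform a priori estimates based on a three-tier dissipative energy, and then to extract the polynomial decay of $\mathcal E_N$ by an interpolation argument. Local existence in $C([0,T_*];H^{N+d})$ with $\rho>0$ follows from the symmetric-hyperbolic structure of the $(a,u)$-block of \eqref{MHD1} coupled with the strict parabolicity of the $h$-equation. The whole problem then reduces to proving, on every interval of existence,
\begin{equation*}
\sup_{0\le s\le t}\mathcal E_{N+d}(s)+\int_0^t\mathcal D_N(s)\,ds\le C_0\mathcal E_{N+d,0},\qquad \tfrac34\le\rho\le\tfrac54,
\end{equation*}
where
\begin{equation*}
\mathcal D_N(t):=\|u\|_L^2+\|\nabla a\|_{M-1}^2+\|\nabla h\|_N^2
\end{equation*}
is the dissipation functional encoding the three mechanisms highlighted in the abstract; a standard continuity argument then yields item (i) and the first two bounds of item (ii).

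I would build $\mathcal D_N$ in three tiers. The \emph{high-order tier} for $h$ is the standard energy identity: for each $|\alpha|\le N+d$, pair $\partial^\alpha$ of \eqref{MHD1} with $(p'(\rho)/\rho)\partial^\alpha a$, $\partial^\alpha u$, and $\partial^\alpha h$; the cancellation $\int (w\cdot\nabla)\partial^\alpha h\cdot\partial^\alpha u\,dx+\int (w\cdot\nabla)\partial^\alpha u\cdot\partial^\alpha h\,dx=\int (w\cdot\nabla)(\partial^\alpha u\cdot\partial^\alpha h)\,dx=0$ eliminates the leading coupling, while $-\Delta h$ supplies $\|\nabla h\|_{N+d}^2$; Moser and Kato--Ponce estimates close it as $\tfrac{d}{dt}\mathcal E_{N+d}+\|\nabla h\|_{N+d}^2\lesssim (1+\sqrt{\mathcal E_{N+d}})\,\mathcal E_N^{1/2}\,\mathcal E_{N+d}$. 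The \emph{intermediate tier} extracts $\|\nabla a\|_{M-1}^2$ from the momentum equation: testing $\partial^\alpha$ of the momentum equation against $\nabla\partial^\alpha a$ for $|\alpha|\le M-1$ produces the coercive term $\int p'(\rho)|\nabla\partial^\alpha a|^2\,dx\gtrsim\beta\|\nabla\partial^\alpha a\|^2$; the $\rho\partial_t u$ contribution becomes a total time derivative $\tfrac{d}{dt}\int\rho\,\partial^\alpha u\cdot\nabla\partial^\alpha a\,dx$ modulo errors handled via $\partial_t\rho=-\mathrm{div}(\rho u)$, while the magnetic forcing $\int\partial^\alpha[(w\cdot\nabla)h]\cdot\nabla\partial^\alpha a\,dx$ is Young-absorbed into $\|\nabla h\|_M^2\le\|\nabla h\|_{N+d}^2$ thanks to $M\le N-r-2$. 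The \emph{low-order tier} extracts $\|u\|_L^2$: solving the induction equation yields
\begin{equation*}
\|(w\cdot\nabla)u\|_{L+r}\lesssim\|\partial_t h\|_{L+r}+\|h\|_{L+r+2}+\|\mathrm{div}\,u\|_{L+r}+\text{nonlinear},
\end{equation*}
and the Diophantine Poincar\'e-type inequality $\|u\|_L\lesssim\|(w\cdot\nabla)u\|_{L+r}$ (valid since $u$ is almost mean-zero by \eqref{junzhi1} in perturbation regime) upgrades this to $\|u\|_L^2$-dissipation; the right-hand side fits inside the previous tiers because $L+r+1\le M$.

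Summing the three tiers with small positive coefficients produces a master inequality $\tfrac{d}{dt}\widetilde{\mathcal E}_N+c\,\mathcal D_N\le C\sqrt{\mathcal E_{N+d}}\,\mathcal D_N$, together with a top-level counterpart $\tfrac{d}{dt}\widetilde{\mathcal E}_{N+d}\le C\sqrt{\mathcal E_N}\,\mathcal D_N$, where $\widetilde{\mathcal E}_j\sim\mathcal E_j$ under smallness and the tildes absorb the time-derivative correctors from the intermediate and low tiers. Under the smallness hypothesis on $\mathcal E_{N,0}(1+\mathcal E_{N+d,0}^{2(N+r-L)/(d-2(N+r-L))})$, a standard bootstrap closes both bounds for all $t\ge 0$, which yields item (i), the density bound $\tfrac34\le\rho\le\tfrac54$, and $\mathcal E_{N+d}(t)\le C_0\mathcal E_{N+d,0}$.

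The decay rate in item (ii) then follows by interpolation. Gagliardo--Nirenberg, combined with the Diophantine loss of $r$ derivatives separating $\|u\|_L$ from the full $H^N$-norm of $u$, delivers
\begin{equation*}
\mathcal E_N(t)\lesssim\mathcal D_N(t)^{\theta}\,\mathcal E_{N+d}(t)^{1-\theta},\qquad \theta=\frac{d}{d+N+r-L},
\end{equation*}
and substituting into $\tfrac{d}{dt}\widetilde{\mathcal E}_N+c_0\mathcal D_N\le 0$ (after absorbing the high-order factor) yields the scalar ODE $\tfrac{d}{dt}\widetilde{\mathcal E}_N+c_1\,\mathcal E_{N+d,0}^{-(N+r-L)/d}\,\widetilde{\mathcal E}_N^{1+(N+r-L)/d}\le 0$, whose integration gives exactly the algebraic rate $[1+(\mathcal E_{N,0}/\mathcal E_{N+d,0})^{(N+r-L)/d}t]^{-d/(N+r-L)}$. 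The main technical obstacle I anticipate is the bookkeeping at the intermediate tier: since the $\nabla a$-dissipation is only indirect (extracted through $\nabla p$ in the momentum equation), every nonlinear commutator there must be rerouted either up into the high-order $\nabla h$-dissipation (costing $r+1$ derivatives via Young's inequality) or down into the low-order $u$-tier (paying another $r$ derivatives through the Diophantine inequality); balancing these competing derivative budgets is exactly what forces the hierarchy $r+3\le L\le M-r-1$, $M\le N-r-2$, $d>2(N+r-L)$, and constitutes the delicate part of the whole argument.
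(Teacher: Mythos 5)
Your overall architecture --- a three-tier dissipative energy, a continuity/bootstrap argument, and a Bernoulli-type ODE obtained by interpolation to produce the decay rate --- agrees with the paper's, and the algebraic decay calculation from that ODE is correct. But there is a genuine gap at your intermediate tier. Testing the momentum equation against $\nabla\partial^\alpha a$ does \emph{not} produce a usable dissipation $\|\nabla a\|_{M-1}^2$. The obstruction is hidden inside the piece you tuck away as ``modulo errors'': when you write $\int\rho\,\partial_t\partial^\alpha u\cdot\nabla\partial^\alpha a\,dx$ as a total time derivative you also generate $-\int\rho\,\partial^\alpha u\cdot\nabla\partial_t\partial^\alpha a\,dx$, and substituting $\partial_t a=-\mathrm{div}\,u-\mathrm{div}(au)$ followed by integration by parts produces $+\|\mathrm{div}\,\partial^\alpha u\|_0^2$ on the right-hand side, with the \emph{wrong} sign. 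The linearized acoustic block $\partial_t a+\mathrm{div}\,u=0$, $\partial_t u+\beta\nabla a=\cdots$ is conservative; it cannot be milked for density dissipation on its own. What this pairing actually delivers (the paper's Lemma on the Euler sub-block) is a \emph{bilateral} inequality of the type $\|\mathrm{div}\,\Lambda^{M-r-1}u\|_0^2+\frac{d}{dt}\int\Lambda^{M-r-1}\mathrm{div}\,u\,\Lambda^{M-r-1}a\,dx\le\beta\|a\|_{M-r}^2+\cdots$, i.e.\ $\mathrm{div}\,u$-dissipation at the \emph{price} of $\|a\|_{M-r}^2$ entering with a plus sign, not the other way around.

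The missing ingredient --- and the central observation of the paper --- is a magnetic dissipation mechanism for $a$: taking $w\times$ of the momentum equation and $w\cdot$ of the induction equation yields, at the linearized level, the damped wave equation $\partial_t^2(h\cdot w)-\Delta\partial_t(h\cdot w)-|w|^2\Delta(h\cdot w)-\beta\Delta_w a=0$ with $\Delta_w:=(w\times\nabla)\cdot(w\times\nabla)$, so that the derivatives of $a$ \emph{perpendicular} to $w$ dissipate. Concretely, pairing $\Lambda^M$ of the crossed momentum equation against $(w\times\nabla)\Lambda^Ma$ and exploiting the dotted induction equation gives genuine dissipation of $\|(w\times\nabla)\Lambda^Ma\|_0^2$, which the Diophantine--Poincar\'e inequality upgrades to $\|a\|_{M-r}^2$ only --- so the dissipation order is $M-r$, not $M-1$; the analogous loss affects your low tier too, giving $\|u\|_{L-r}$, not $\|u\|_L$, from $\|w\cdot\nabla\Lambda^Lu\|_0$. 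Finally the three tiers must interlock: the $\|\mathrm{div}\,\Lambda^{M-r-1}u\|_0^2$ term from the acoustic pairing absorbs the $\|\mathrm{div}\,\Lambda^Lu\|_0^2$ forcing from the $u$-tier (using $L\le M-r-1$) and the $\epsilon$-error from the magnetic $a$-tier, while the $\|a\|_{M-r}^2$ dissipation from the magnetic tier absorbs the $\beta\|a\|_{M-r}^2$ forcing from the acoustic pairing; this closed loop of mutual absorptions is what the hierarchy $r+3\le L\le M-r-1$, $M\le N-r-2$ encodes and is precisely what the proposal skips.
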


\begin{remark}\label{remar1}
To our best knowledge, this is the first global well-posedness result for the inviscid and resistive isentropic MHD system (\ref{MHD}).
If $H_0\equiv0$, then (\ref{MHD}) reduces to the compressible Euler system for which singularities will form in finite time,
see, e.g., \cite{BSV1,BSV2,BSV3,Cao-Serrano-Shi-Staffilani,CHRIST1,CHRIST2,LUKSPECK,MRRS1,SIDERIS,YIN}. So our result demonstrates that
the nonzero constant background magnetic field has stabilizing effect on the motion of the inviscid fluids.
\end{remark}

\begin{remark}\label{remar2}
Regarding the global well-posedness of the multi-dimensional inviscid and resistive compressible MHD system,
the available results in the existing literature are only for
the heat-conducting flow, see \cite{Wang-Xin,Wu-Xu-Zhai},
where the heat conducting plays a key role to control the divergence of the velocity.
The current paper gives the first result on the global well-posedness to the inviscid and resistive compressible MHD system
in the isentropic scenario.
\end{remark}

Key strategies of the proof are outlined as follows. The local-in-time well-posedness of system \eqref{MHD1} follows from
standard techniques. By continuity arguments, the main issue is to get suitable
time independent a priori estimates. The main difficulty arises from the absence of explicit dissipation or damping terms for the
density and velocity. To overcome this difficulty, we leverage
structural interactions with the background magnetic field to exploit the hidden dissipation mechanisms
for the density and velocity. Once these dissipative structures are identified, we integrate them with the inherent dissipation for the
magnetic field. This synthesis yields decaying estimates for $(a,u,h)$ (see Lemma \ref{lemma-pri4} for details),
ensuring the time integrability of $\|a,u,h\|_N$ and further the full energy estimates (see Proposition \ref{PROP-KEY}).
Key ideas of revealing hidden dissipation of the density and velocity come from analyzing the following linearized system to (\ref{MHD1}):
\begin{equation}\label{explain1}
\left\{
\begin{array}{rl}
&\hspace{-0,5cm}\partial_ta+\textrm{div}u=0,
\smallskip\\
&\hspace{-0,5cm}\partial_t u
+\beta\nabla a=w\cdot \nabla h-\nabla(w\cdot h),
\medskip\\
&\hspace{-0,5cm}\partial_t h-\Delta h
=w\cdot \nabla u-w\textrm{div}u,
\medskip\\
&\hspace{-0,5cm}{\mathop{\rm div}}\, h=0,
\end{array}
\right.
\end{equation}
where $a$ is defined as in (\ref{a}). First, basic energy inequality for (\ref{explain1}) is given by
\begin{equation}\begin{split}\label{explain2}
\frac12\frac{d}{dt}\|\sqrt\beta a,u,h\|^2_N+\|\nabla h\|^2_N=0,
\end{split}\end{equation}
and see Lemma \ref{lemma-pri1} for the corresponding estimate for nonlinear system \eqref{MHD1}.
Next, by $\eqref{explain1}_{1,2}$, we have the following bilateral estimate between $\text{div}\,u$ and $\nabla a$
\begin{equation}\begin{split}\label{explain3}
&\|\textrm{div}\Lambda^{M-r-1}u\|_0^2+\frac{d}{dt}\int\Lambda^{M-r-1}\textrm{div}u\Lambda^{M-r-1}adx\\
=&\beta\|\nabla\Lambda^{M-r-1}a\|_0^2-\int\Delta\Lambda^{M-r-1}h\cdot w\Lambda^{M-r-1}adx,
\end{split}\end{equation}
and see Lemma \ref{lemma-2025-6-27} for the corresponding estimate for the nonlinear system \eqref{MHD1}.
Then, we use $(\ref{explain1})_{2,3}$ to estimate $u$ as
\begin{equation}\label{explain4'}
\begin{split}
\frac12\|&w\cdot\nabla\Lambda^Lu\|_0^2+\frac{d}{dt}\int\Lambda^Lu\cdot(w\cdot\nabla)\Lambda^Lhdx
\leq C\|\mathrm{div}\Lambda^{L}u\|^2_0+C\|h\|_{N+1}^2,
\end{split}
\end{equation}
and see Lemma \ref{lemma-pri3} for the corresponding estimate for the nonlinear system \eqref{MHD1}.
Finally, taking cross product of $w$ with $\eqref{explain1}_2$ and inner product of $w$ with $\eqref{explain1}_3$ gives
\begin{equation}\label{explain4}
\left\{
\begin{array}{rl}
&\hspace{-0,5cm}\partial_t(u\times w)
-\beta(w\times\nabla)a=w\cdot \nabla(h\times w)+(w\times\nabla)(w\cdot h),
\medskip\\
&\hspace{-0,5cm}\partial_t(h\cdot w)-\Delta(h\cdot w)
=\textrm{div}_w(u\times w),
\end{array}
\right.
\end{equation}
from which one gets
$$
\partial_t^2(h\cdot w)-\Delta\partial_t(h\cdot w)-|w|^2\Delta(h\cdot w)-\beta\Delta_wa=0,
$$
where $\textrm{div}_w f:=(w\times\nabla)\cdot f$ and $\Delta_w f:=\textrm{div}_w(w\times\nabla)f.$ This implies that
$w\times\nabla a$ may dissipate. Indeed, one can obtain from \eqref{explain4} that
\begin{equation}
\label{explain5}\begin{split}
\beta\|(w\times\nabla)\Lambda^Ma\|_0^2+&\frac{d}{dt}\int\Big(\textrm{div}_w\Lambda^M(u\times w)\Lambda^Ma+\Lambda^M(h\cdot w)\textrm{div}\Lambda^Mu\Big)dx\\
&\leq C\|h\|^2_{N+1}+\epsilon(\|\textrm{div}\Lambda^{M-r-1}u\|_0^2+\|a\|^2_{M-r}),
\end{split}\end{equation}
and see Lemma \ref{lemma-pri2} for the corresponding estimate for the nonlinear system \eqref{MHD1}.
Combining (\ref{explain3}) with (\ref{explain4'}), together with \eqref{explain5}, and applying the Poincar\'e type
inequality (see Corollary \ref{CorPoincare}) gained from the Diophantine condition, we obtain the following desired dissipation
estimate for $a$ and $u$
\begin{equation*}\begin{split}
 K_3\|u\|_{L-r}^2&+C_1^*\beta\|a\|_{M-r}^2+\frac{d}{dt}\int\bigg[3C_{1}^*\Lambda^{M-r-1}\mathrm{div}u\Lambda^{M-r-1}a
+2(w\cdot\nabla)\Lambda^Lh\cdot\Lambda^Lu\\
&+\frac{8C_1^*}{K_3}\bigg({\rm{div}}_w\Lambda^M(u\times w)\Lambda^Ma+\Lambda^M(h\cdot w)\rm{div}\Lambda^Mu\bigg)\bigg]dx\leq
 C\|h\|^2_{N+1},
\end{split}\end{equation*}
and see Corollary \ref{corollary2} for the corresponding estimate for the nonlinear system \eqref{MHD1}. Combining this dissipative estimate
for $(a, u)$ with that for $h$ in (\ref{explain2}), one can derive the time independent a priori estimate under suitable smallness
condition on the initial data.

The rest of this paper is organized as follows. The next section contains some preliminary lemmas, especially the Poincar\'e type inequalities
gained from the Diophantine condition. Section \ref{sec3} is the main part of this paper, where time independent a priori estimates
are derived for solutions to system \eqref{MHD1}, under some suitable smallness conditions on the initial data. The proof of the main result
of this paper, Theorem \ref{th:main1}, is presented in Section \ref{sec4}.

\section{Preliminaries}\label{sec2}

In this section, we collect some preliminary lemmas which will be used in the next section. We start with the following two Poincar\'e
type inequalities derived from the Diophantine condition, where
the first one is known in the existing literature, see, e.g., Lemma 2.1 in \cite{Xie-Jiu-Liu}, while the second one is new.

\begin{lemma}\label{Diophantine-inequality}
Assume that the constant vector $w\in\mathbb R^3$ satisfies the Diophantine condition \eqref{Diophantine0} with positive constants $c$ and $r$.
Then, for any $s>r$, there is a constant $K_1$ depending only on $|\mathbb T^3|,s,r,c,|w|$ such that
$$\|\Lambda^{s-r} f\|_{0}\leq K_1\|w\cdot\nabla\Lambda^sf\|_{0},\quad \forall f\in H^{s+1}(\mathbb T^3).$$
\end{lemma}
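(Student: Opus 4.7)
The plan is to pass to the Fourier side and reduce the inequality to a termwise comparison of Fourier coefficients. Since $\mathbb T^3=[0,1]^3$ I expand any $f\in H^{s+1}(\mathbb T^3)$ as $f(x)=\sum_{k\in\mathbb Z^3}\hat f(k)\,e^{2\pi i k\cdot x}$. Under this expansion, the Fourier multipliers of $\Lambda^{s-r}$ and $w\cdot\nabla\Lambda^s$ are $(2\pi|k|)^{s-r}$ and $2\pi i(w\cdot k)(2\pi|k|)^s$, respectively. Because $s>r$, the constant mode $k=0$ is killed on both sides (by $|k|^{s-r}$ on the left, by $w\cdot k$ on the right), so Parseval's identity reduces the target inequality to the pointwise (in $k$) comparison
\[
(2\pi|k|)^{2(s-r)}|\hat f(k)|^{2}\leq K_1^{2}\,(2\pi)^{2}(w\cdot k)^{2}(2\pi|k|)^{2s}|\hat f(k)|^{2},\quad k\in\mathbb Z^{3}\setminus\{0\}.
\]

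Next I would cancel the common factors to see that the required inequality is equivalent to
\[
\frac{1}{(2\pi)^{2+2r}\,(w\cdot k)^{2}\,|k|^{2r}}\leq K_1^{2}\qquad\text{for all } k\neq 0.
\]
This is exactly where the Diophantine hypothesis is used: \eqref{Diophantine0} gives $(w\cdot k)^{2}|k|^{2r}\geq c^{2}$, so the left-hand side is bounded uniformly in $k$ by $\frac{1}{c^{2}(2\pi)^{2+2r}}$. Taking $K_1=\frac{1}{c(2\pi)^{1+r}}$ therefore suffices, and this constant depends only on $c$ and $r$ (the stated dependence on $|\mathbb T^{3}|$, $s$, and $|w|$ in the lemma is slack, coming from a choice to allow a non-unit fundamental domain or a less sharp constant).

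There is no substantive analytic obstacle here; the proof is essentially one line on the Fourier side. The content of the lemma is structural rather than technical: the Diophantine condition lets one invert the symbol $w\cdot k$ only at the cost of multiplying by $|k|^{r}$, which is precisely what forces the left-hand norm to be $\Lambda^{s-r}f$ rather than $\Lambda^{s}f$. This ``loss of $r$ derivatives'' is the intrinsic Poincar\'e-type price one pays under \eqref{Diophantine0} and is exactly the feature the rest of the paper has to absorb when it uses the lemma to recover dissipative control of $a$ and $u$ from control of $h$.
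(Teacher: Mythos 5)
Your proof is correct, and it is essentially the same Fourier-side, termwise argument that the paper uses to prove the companion Lemma~\ref{Diophantine-inequality0}; the paper itself does not prove Lemma~\ref{Diophantine-inequality} but cites it from the literature. One small remark: your observation that $K_1=\frac{1}{c(2\pi)^{1+r}}$ depends only on $c$ and $r$ is accurate on the unit torus, and the lemma's broader stated dependence is, as you say, just slack.
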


\begin{lemma}\label{Diophantine-inequality0}
Assume that the constant vector $w\in\mathbb R^3$ satisfies the Diophantine condition \eqref{Diophantine0} with positive constants $c$
and $r$. Then, for any $s\geq r$, there is a constant $K_2$ depending only on $|\mathbb T^3|,s,r,c,|w|$ such that
$$
\|f\|_{s-r}\leq K_2\|w\times\Lambda^s\nabla f\|_0,\quad\forall f\in H^{s+1}(\mathbb T^3) \text{ with }\int fdx=0.
$$
\end{lemma}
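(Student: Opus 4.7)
The plan is to transfer the inequality to the Fourier side and reduce it to a pointwise lower bound on $|w\times k|$. Writing $f=\sum_{k\in\mathbb Z^3}\hat f(k)e^{2\pi i k\cdot x}$ and using $\int f\,dx=0$ to kill the $k=0$ mode, Parseval converts both sides to weighted $\ell^{2}$-sums over $k\in\mathbb Z^{3}\setminus\{0\}$. Matching coefficients reduces the claim to
\begin{equation*}
(1+|k|^{2})^{s-r}\leq K_{2}^{\,2}\,|k|^{2s}\,|w\times k|^{2},\qquad k\in\mathbb Z^{3}\setminus\{0\},
\end{equation*}
which, since $|k|\geq 1$, is in turn equivalent to the \emph{transverse Diophantine bound}
\begin{equation*}
|w\times k|\geq c_{1}/|k|^{r},\qquad k\in\mathbb Z^{3}\setminus\{0\},
\end{equation*}
for some $c_{1}>0$ depending only on the admissible parameters.

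The heart of the argument will be to derive this cross-product estimate from the hypothesis on $w\cdot k$. I would combine a geometry-of-numbers argument with the orthogonal decomposition $w=\alpha k+w_{\perp}$, $w_{\perp}\cdot k=0$. A direct computation gives $|w_{\perp}|=|w\times k|/|k|$, and whenever $k'\in\mathbb Z^{3}$ satisfies $k\cdot k'=0$ one has $w\cdot k'=w_{\perp}\cdot k'$. The rank-two sublattice $\Lambda_{k}:=\{k'\in\mathbb Z^{3}:k\cdot k'=0\}$ sits inside the plane $k^{\perp}\cong\mathbb R^{2}$, and a Smith-normal-form computation gives $\mathrm{covol}(\Lambda_{k})=|k|/\gcd(k_{1},k_{2},k_{3})\leq|k|$. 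Minkowski's convex-body theorem in the plane then supplies a nonzero $k'\in\Lambda_{k}$ with $|k'|\leq C_{0}\sqrt{|k|}$. Applying the Diophantine hypothesis \eqref{Diophantine0} to this $k'$ and invoking Cauchy--Schwarz,
\begin{equation*}
\frac{c}{|k'|^{r}}\leq|w\cdot k'|=|w_{\perp}\cdot k'|\leq|w_{\perp}|\,|k'|=\frac{|w\times k|\,|k'|}{|k|},
\end{equation*}
which rearranges to
\begin{equation*}
|w\times k|\geq\frac{c\,|k|}{|k'|^{r+1}}\geq\frac{c}{C_{0}^{\,r+1}}\,|k|^{(1-r)/2}.
\end{equation*}
Since $|k|\geq 1$ and $(r-1)/2\leq r$, this exceeds $c_{1}/|k|^{r}$ with $c_{1}=c/C_{0}^{\,r+1}$, which is the required transverse bound.

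Feeding this pointwise estimate back into the Fourier computation produces the stated inequality with $K_{2}$ depending only on $|\mathbb T^{3}|$, $s$, $r$, $c$, $|w|$; the mean-zero assumption makes $\|f\|_{s-r}$ and $\|\Lambda^{s-r}f\|_{0}$ interchangeable up to harmless constants. The main obstacle is exactly the Minkowski step: the Diophantine hypothesis directly controls how nearly perpendicular integer vectors can be to $w$, whereas $|w\times k|$ measures how nearly parallel $k$ is to $w$. Producing a short integer vector $k'$ with $k'\perp k$ is what bridges these two Diophantine regimes, and the $\sqrt{|k|}$ loss in Minkowski is absorbed into the exponent thanks to the assumption $r\geq r_{0}>0$; everything else is routine Fourier bookkeeping.
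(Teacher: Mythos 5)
Your proof is correct, but it reaches the key transverse bound $|w\times k|\gtrsim |k|^{-r}$ by a more elaborate route than the paper takes. You use Minkowski's convex-body theorem to produce a short integer vector $k'$ in the rank-two lattice $\Lambda_k=\{k':k\cdot k'=0\}$, with $|k'|\lesssim\sqrt{|k|}$, and then apply the Diophantine hypothesis to $k'$; the covolume computation, the primitivity reduction, and the Minkowski step are all sound, and the exponent bookkeeping closes since $(r-1)/2\le r$. The paper instead just writes down an \emph{explicit} nonzero integer vector orthogonal to $k$: assuming (WLOG) $k_3\neq 0$, it sets $\tilde k=(0,-k_3,k_2)$, notes $|\tilde k|\le|k|$, and checks by a one-line algebraic identity that $|w\cdot\tilde k|\le|w\times k|$, so the Diophantine bound on $w\cdot\tilde k$ transfers directly to $|w\times k|\ge c|k|^{-r}$. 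Your Cauchy--Schwarz step $|w\cdot k'|=|w_\perp\cdot k'|\le|w_\perp||k'|=|w\times k|\,|k'|/|k|$, applied with $k'=\tilde k$, would reproduce exactly the paper's estimate, so the two proofs share the same skeleton; what Minkowski buys you is a quantitatively stronger exponent $|k|^{(1-r)/2}$ in place of $|k|^{-r}$, but the lemma does not need the improvement, so the geometry-of-numbers machinery is overkill. In short: correct, genuinely different, and instructive, but the paper's explicit choice of orthogonal vector is the more economical argument.
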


\begin{proof}
Given any $k:=(k_1,k_2,k_3)\in\mathbb{Z}^3\setminus\{0\}$, without loss of generality, we assume $k_3\neq0$ and denote $\tilde{k}:=(0,-k_3,k_2)$. Then, it holds that
\begin{equation}\begin{split}\label{le-Dio1-4-7}
|(w\cdot\tilde{k})|\leq|w\times k|.
\end{split}\end{equation}
Indeed, direct calculation yields
\begin{equation*}\begin{split}
|w\times k|^2-|(w\cdot\tilde{k})|^2=(w_3k_1-w_1k_3)^2+(w_1k_2-w_2k_1)^2\geq0.
\end{split}\end{equation*}
Thus, combining \eqref{le-Dio1-4-7} with the Diophantine condition \eqref{Diophantine0}, one has
\begin{equation}\begin{split}\label{le-Dio1}
c|k|^{-r}\leq c|\tilde{k}|^{-r}\leq|(\tilde{k}\cdot w)|\leq|w\times k|.
\end{split}\end{equation}
Since $\int fdx=0$ by assumption, one can decompose $f$ as
$$
f=\sum_{k\in\mathbb Z^3\backslash\{0\}}f_ke^{2i\pi k\cdot x}.
$$
As a result, one gets by \eqref{le-Dio1} that
\begin{equation*}\begin{split}
\|f\|_0^2+\|\Lambda^{s-r}f\|^2_0&=\sum_{k\in\mathbb Z^3\backslash\{0\}}\left(1+|2\pi k|^{2(s-r)}\right)|f_k|^2\\
&\leq 2\sum_{k\in\mathbb Z^3\backslash\{0\}} |2\pi k|^{2(s-r)} |f_k|^2\\
&\leq 2(2\pi)^{-2r-2}c^{-2}\sum_{k\in\mathbb Z^3\backslash\{0\}}|2\pi k|^{2s}|2\pi k\times w|^2|f_k|^2\\
&=2(2\pi)^{-2r-2}c^{-2}\|w\times\nabla\Lambda^sf\|_0^2,
\end{split}\end{equation*}
proving the conclusion.
\end{proof}

As a direct corollary of Lemma \ref{Diophantine-inequality} and Lemma \ref{Diophantine-inequality0}, one obtains the following corollary by the Poincar\'e inequality.

\begin{corollary}\label{CorPoincare}
Assume that the constant vector $w\in\mathbb R^3$ satisfies the Diophantine condition \eqref{Diophantine0} with positive constants
$c$ and $r$. Then, for any $s\geq r$, there is a constant $K_3$ depending only on $|\mathbb T^3|,s,r,c,|w|$ such that
$$
K_3\|f\|_{s-r}\leq \|w\times\Lambda^s\nabla f\|_0,\quad K_3\|f\|_{s-r}\leq \|w\cdot\Lambda^s\nabla f\|_0,
$$
for any $f\in H^{s+1}(\mathbb T^3)$ with $\int fdx=0$.
\end{corollary}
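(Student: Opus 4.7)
The plan is to verify that both inequalities follow essentially directly from Lemma \ref{Diophantine-inequality} and Lemma \ref{Diophantine-inequality0}, with the mean-zero assumption $\int f\,dx=0$ used only to pass from homogeneous to inhomogeneous Sobolev norms via a Poincar\'e-type estimate on the torus.

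For the cross-product inequality $K_3\|f\|_{s-r}\leq\|w\times\Lambda^s\nabla f\|_0$, I will simply invoke Lemma \ref{Diophantine-inequality0}, which already gives precisely this bound (with constant $K_2$) under the mean-zero hypothesis. No additional work is needed beyond naming the constant.

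For the dot-product inequality $K_3\|f\|_{s-r}\leq\|w\cdot\Lambda^s\nabla f\|_0$, the task is to upgrade Lemma \ref{Diophantine-inequality}, which only controls the homogeneous piece $\|\Lambda^{s-r}f\|_0$, to the full $H^{s-r}$ norm. The key observation is that for mean-zero $f\in H^{s-r}(\mathbb T^3)$ and $s-r\geq 0$, the Fourier expansion $f=\sum_{k\in\mathbb Z^3\setminus\{0\}}f_k e^{2i\pi k\cdot x}$ yields
\begin{equation*}
\|f\|_0^2=\sum_{k\neq 0}|f_k|^2\leq (2\pi)^{-2(s-r)}\sum_{k\neq 0}|2\pi k|^{2(s-r)}|f_k|^2=(2\pi)^{-2(s-r)}\|\Lambda^{s-r}f\|_0^2,
\end{equation*}
since $|k|\geq 1$ for every $k\in\mathbb Z^3\setminus\{0\}$. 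Combining this with $\|f\|_{s-r}^2\lesssim\|f\|_0^2+\|\Lambda^{s-r}f\|_0^2$ and Lemma \ref{Diophantine-inequality} chains into $\|f\|_{s-r}\lesssim\|\Lambda^{s-r}f\|_0\lesssim\|w\cdot\nabla\Lambda^s f\|_0$, producing the desired $K_3$.

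The only subtlety, rather than a real obstacle, is the boundary case $s=r$, for which Lemma \ref{Diophantine-inequality} as stated requires $s>r$. Here I would repeat the short Fourier computation used in the proof of Lemma \ref{Diophantine-inequality0}: since $\int f\,dx=0$,
\begin{equation*}
\|w\cdot\Lambda^r\nabla f\|_0^2=\sum_{k\neq 0}|2\pi k|^{2r}(2\pi w\cdot k)^2|f_k|^2\geq(2\pi)^{2r+2}c^2\sum_{k\neq 0}|f_k|^2=(2\pi)^{2r+2}c^2\|f\|_0^2,
\end{equation*}
by the Diophantine condition \eqref{Diophantine0} applied modewise. Taking $K_3$ to be the minimum of the constants obtained in the two regimes ($s=r$ and $s>r$) and in the two inequalities completes the proof.
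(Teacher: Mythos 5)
Your proof is correct and follows the same route the paper intends (the paper gives no proof of the corollary, simply calling it a direct consequence of Lemmas \ref{Diophantine-inequality} and \ref{Diophantine-inequality0} together with the Poincar\'e inequality). You were right to flag and separately handle the boundary case $s=r$, which Lemma \ref{Diophantine-inequality} as stated does not cover; your modewise Fourier estimate using the Diophantine condition fills that small gap correctly, and the $s>r$ regime follows from Lemma \ref{Diophantine-inequality} plus the Poincar\'e inequality for mean-zero functions exactly as you wrote.
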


The following three lemmas are standard, see, e.g., Lemma 2.4--2.6 in \cite{Wu-Zhai}.

\begin{lemma}\label{Hk-estimate}
Let $s\geq0.$ Then, there is a constant $K_4$ depending only on $s$ and $|\mathbb T^3|$ such that
$$
\|fg\|_{s}\leq K_4\|f\|_{L^\infty}\|g\|_s+K_4\|g\|_{L^\infty}\|f\|_s.
$$
\end{lemma}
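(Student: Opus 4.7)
The inequality is the classical Moser-type (tame) product estimate in Sobolev spaces on $\mathbb{T}^3$; I would handle the integer and non-integer ranges of $s$ with different tools.

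For integer $s$, I would start from the equivalence $\|fg\|_s^2\sim\sum_{|\alpha|\leq s}\|\partial^\alpha(fg)\|_0^2$ and expand via the Leibniz rule
\begin{equation*}
\partial^\alpha(fg)=\sum_{\beta\leq\alpha}\binom{\alpha}{\beta}\partial^\beta f\,\partial^{\alpha-\beta}g.
\end{equation*}
For each mixed term with $0<|\beta|<s$ I would apply H\"older's inequality with exponents $p_1=2s/|\beta|$ and $p_2=2s/(s-|\beta|)$ (so $1/p_1+1/p_2=1/2$), followed by the Gagliardo--Nirenberg interpolation on $\mathbb{T}^3$,
\begin{equation*}
\|\partial^\beta f\|_{L^{2s/|\beta|}}\lesssim\|f\|_{L^\infty}^{1-|\beta|/s}\|f\|_{s}^{|\beta|/s},
\end{equation*}
together with its symmetric analogue for $g$. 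Young's inequality with conjugate exponents $s/|\beta|$ and $s/(s-|\beta|)$ then collapses the resulting product $\|f\|_{L^\infty}^{1-|\beta|/s}\|f\|_s^{|\beta|/s}\|g\|_{L^\infty}^{|\beta|/s}\|g\|_s^{1-|\beta|/s}$ into the desired sum $\|f\|_{L^\infty}\|g\|_s+\|g\|_{L^\infty}\|f\|_s$. The extreme cases $|\beta|\in\{0,s\}$ and the low-order piece $\|fg\|_0\leq\|f\|_{L^\infty}\|g\|_0$ are immediate from H\"older's inequality, closing the integer case.

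For non-integer $s\geq 0$, I would use Bony's paraproduct decomposition on $\mathbb{T}^3$ with Littlewood--Paley projectors $\{\Delta_j\}$:
\begin{equation*}
fg=T_fg+T_gf+R(f,g),\qquad T_fg:=\sum_j S_{j-2}f\,\Delta_jg,
\end{equation*}
with $R(f,g)=\sum_{|j-j'|\leq 2}\Delta_jf\,\Delta_{j'}g$. The paraproducts satisfy $\|T_fg\|_s\lesssim\|f\|_{L^\infty}\|g\|_s$ and $\|T_gf\|_s\lesssim\|g\|_{L^\infty}\|f\|_s$ by standard spectral-localization arguments together with $\|S_{j-2}f\|_{L^\infty}\lesssim\|f\|_{L^\infty}$. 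For the remainder, $\Delta_kR(f,g)$ receives contributions only from $j\gtrsim k-3$ (since $\Delta_jf\,\Delta_{j'}g$ has spectrum in a ball of radius $\lesssim 2^j$); bounding each dyadic block by Bernstein with one factor in $L^\infty$ and one in $L^2$, then multiplying by $2^{ks}$ and summing in $k$ via Young's inequality for series, yields $\|R(f,g)\|_s\lesssim\|f\|_{L^\infty}\|g\|_s+\|g\|_{L^\infty}\|f\|_s$, the geometric series converging thanks to $s>0$.

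The hard part is the remainder estimate in the non-integer regime: one must distribute the $L^\infty$ and $L^2$ norms between $f$ and $g$ carefully to preserve the bilinear tame structure, and the geometric summation degenerates at $s=0$ (where one simply invokes $\|fg\|_0\leq\|f\|_{L^\infty}\|g\|_0$). An alternative route avoiding Littlewood--Paley machinery is to interpolate between the integer bounds at $\lfloor s\rfloor$ and $\lceil s\rceil$ via the identity $[H^{s_0},H^{s_1}]_\theta=H^{(1-\theta)s_0+\theta s_1}$, which reduces the non-integer case to the integer one but requires a bilinear complex-interpolation argument that is ultimately no lighter than the Bony approach.
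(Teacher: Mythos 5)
The paper does not prove this lemma: it is stated as a standard Moser-type product estimate and deferred to Lemma 2.4--2.6 of \cite{Wu-Zhai}, so there is no in-paper argument to compare against. Your sketch is a correct route to the result, and the two regimes are handled appropriately. The Leibniz/H\"older/Gagliardo--Nirenberg/Young chain for integer $s$ is the classical proof; the only cosmetic point is that for a multi-index $\alpha$ with $|\alpha|<s$ the exponents $p_1=2s/|\beta|$, $p_2=2s/(s-|\beta|)$ should really be run at level $|\alpha|$, or (more simply on $\mathbb T^3$) one notes that $\|\cdot\|_s$ is equivalent to $\|\cdot\|_0+\|\Lambda^s\cdot\|_0$, so only the top-order derivatives and the trivial $L^2$ bound $\|fg\|_0\le\|f\|_{L^\infty}\|g\|_0$ need treating. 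The Bony paraproduct argument for non-integer $s$ is precisely the standard one, and it is in fact necessary here since the paper applies the lemma at indices such as $M-r-1$ and $L-r$ that need not be integers ($r>2$ is real). A further simplification you could note: the remainder already satisfies the one-sided bound $\|R(f,g)\|_s\lesssim\|f\|_{L^\infty}\|g\|_s$ alone for $s>0$, so only the paraproduct pieces need the symmetric split; there is no delicate distribution of norms required in the remainder. No genuine gap.
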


\begin{lemma}\label{Hk-estimate-7-8}
Let $s>0.$ Then, there is a constant $K_5$ depending only on $s$ and $|\mathbb T^3|$ such that
$$
\|[\Lambda^s,f]g\|_{0}\leq K_5\|Df\|_{L^\infty}\|\Lambda^{s-1}g\|_{0}+K_5\|g\|_{L^\infty}\|\Lambda^sf\|_0.
$$
\end{lemma}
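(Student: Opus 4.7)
Since this is the standard Kato--Ponce commutator estimate, the plan is to pass to Fourier series on $\mathbb T^3$ and split the resulting bilinear Fourier multiplier into a low-high piece (where the frequency of $f$ is much smaller than that of $g$) and a complementary piece. Writing $f=\sum_k\hat f_k e^{2\pi ik\cdot x}$ and $g=\sum_l\hat g_l e^{2\pi il\cdot x}$, one obtains
$$[\Lambda^s,f]g(x)=\sum_{k,l\in\mathbb Z^3}\sigma(k,l)\hat f_k\hat g_l\,e^{2\pi i(k+l)\cdot x},\qquad \sigma(k,l):=|2\pi(k+l)|^s-|2\pi l|^s.$$
I would then pick a smooth cutoff $\chi(k,l)\in[0,1]$ with $\chi=1$ on $\{4|k|\le|l|\}$ and $\chi=0$ on $\{2|k|\ge|l|\}$, and split $\sigma=\chi\sigma+(1-\chi)\sigma$ to write $[\Lambda^s,f]g=I+II$. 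The zero modes $k=0$ and $l=0$ are trivial (they produce either no commutator or a constant factor), so it suffices to analyze $k,l\ne 0$.

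For $I$, on the support of $\chi$ one has $|l+tk|\sim|l|$ uniformly in $t\in[0,1]$, so the mean value theorem yields
$$|\chi(k,l)\sigma(k,l)|\leq s|2\pi k|\int_0^1|2\pi(l+tk)|^{s-1}\,dt\lesssim |2\pi k|\,|2\pi l|^{s-1}.$$
Hence $I$ is the bilinear Fourier multiplier with symbol $m_1(k,l):=\chi(k,l)\sigma(k,l)\bigl(|2\pi k|\,|2\pi l|^{s-1}\bigr)^{-1}$---a Coifman--Meyer symbol of order zero on its support---applied to the pair $(Df,\Lambda^{s-1}g)$, and the standard paraproduct estimate $L^\infty\times L^2\to L^2$ gives $\|I\|_0\lesssim\|Df\|_{L^\infty}\|\Lambda^{s-1}g\|_0$. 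For $II$, on the support of $1-\chi$ one has $|l|\leq 2|k|$ and $|k+l|\leq 3|k|$, so
$$|(1-\chi(k,l))\sigma(k,l)|\leq |2\pi(k+l)|^s+|2\pi l|^s\lesssim|2\pi k|^s,$$
and rewriting $II$ as a Coifman--Meyer bilinear multiplier of order zero applied to $(\Lambda^s f,g)$ and using $L^2\times L^\infty\to L^2$ boundedness gives $\|II\|_0\lesssim\|\Lambda^sf\|_0\|g\|_{L^\infty}$. Adding these two bounds yields the claim with a constant $K_5$ depending only on $s$ and $|\mathbb T^3|$.

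The main technical step is the boundedness of the two bilinear multipliers $m_1$ and $m_2(k,l):=(1-\chi(k,l))\sigma(k,l)|2\pi k|^{-s}$; on $\mathbb T^3$ this reduces via a dyadic Littlewood--Paley decomposition and Bernstein's inequality to an $\ell^2$ summation of almost-orthogonal dyadic pieces, which is classical. An equivalent but more elementary route is Bony's decomposition $fg=T_fg+T_gf+R(f,g)$, after which $[\Lambda^s,f]g$ splits algebraically as
$$[\Lambda^s,T_f]g+\Lambda^sT_gf-T_{\Lambda^s g}f+\Lambda^sR(f,g)-R(f,\Lambda^sg);$$
the first term carries the $\|Df\|_{L^\infty}\|\Lambda^{s-1}g\|_0$ factor (because $\Lambda^s$ acts on a low-frequency $\times$ high-frequency paraproduct, and differentiating the low factor loses only one derivative), while the remaining four reproduce $\|g\|_{L^\infty}\|\Lambda^sf\|_0$ via standard paraproduct and remainder estimates. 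Either way, the only non-routine step is verifying the paraproduct bounds, and no ingredient beyond the Littlewood--Paley calculus on $\mathbb T^3$ is required.
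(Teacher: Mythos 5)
The paper does not prove this lemma at all: it is listed among ``standard'' facts and attributed to Lemma 2.4--2.6 of \cite{Wu-Zhai}, i.e.\ it is the classical Kato--Ponce commutator estimate. Your sketch reconstructs the standard proof of that estimate (bilinear Fourier multiplier, low-high/remainder splitting, Coifman--Meyer or equivalently Bony's paraproduct decomposition), and the overall strategy is correct: the mean-value-theorem bound $|\sigma(k,l)|\lesssim |k|\,|l|^{s-1}$ on the low-high region and the bound $|\sigma(k,l)|\lesssim|k|^s$ on the complement are exactly the right symbol estimates, and the zero modes are indeed harmless on the torus. Two points deserve flagging. First, a cosmetic slip: with your cutoff ($\chi=1$ on $\{4|k|\le|l|\}$), the support of $1-\chi$ only gives $|l|<4|k|$, not $|l|\le2|k|$; this changes nothing. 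Second, and more substantively, the claim that $m_2(k,l)=(1-\chi)\sigma\,|2\pi k|^{-s}$ is ``a Coifman--Meyer symbol of order zero'' is not literally true: in the high-high region $|k|\sim|l|$ the output frequency $k+l$ can be arbitrarily small relative to $|k|+|l|$, and derivatives of $|2\pi(k+l)|^s$ blow up like $|k+l|^{s-|\alpha|}$ there, violating the Coifman--Meyer derivative bounds. This is precisely the delicate point of the lemma, and the correct repair is the one you mention only in passing at the end: decompose the high-high piece dyadically in the output frequency and sum the geometric series $\sum_{j'\lesssim j}2^{sj'}\lesssim 2^{sj}$, which is where the hypothesis $s>0$ is actually used (equivalently, the terms $\Lambda^sR(f,g)$ and $T_{\Lambda^sg}f$ in the Bony route). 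So the proposal is an acceptable sketch of the standard argument, provided that last summation is carried out rather than subsumed under a Coifman--Meyer black box that does not directly apply to $m_2$.
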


\begin{lemma}\label{Hk-estimate-1}
Let $s>0$ and $f\in H^s(\mathbb T^3)\cap L^\infty(\mathbb T^3).$ Assume that $F$ is a smooth function on $\mathbb R$ with $F(0)=0$. Then,
it holds that
$$
\|F(f)\|_{s}\leq K_6(1+\|f\|_{L^\infty})^{[s]+1}\|f\|_s,
$$
where the constant $K_6$ depends only on $|\mathbb T^3$, $s$, and $\sup_{k\leq[s]+2, t\leq\|f\|_{L^\infty}}|F^{(k)}(t)|.$
\end{lemma}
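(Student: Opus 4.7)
The plan is to prove this classical Moser-type composition estimate by first handling integer $s$ via the Faà di Bruno chain rule combined with Gagliardo--Nirenberg interpolation, and then extending to general $s$ through a Littlewood--Paley bookkeeping argument (or equivalently via the Sobolev--Slobodeckij norm characterization of $H^s$).

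First I would treat the case $s=m\in\mathbb Z_{\geq 0}$. For any multi-index $\alpha$ with $|\alpha|=m\geq 1$, the Faà di Bruno formula expresses $\partial^\alpha F(f)$ as a finite sum of terms of the form $F^{(k)}(f)\,\partial^{\beta_1}f\cdots\partial^{\beta_k}f$ with $1\leq k\leq m$, $|\beta_j|\geq 1$, and $\sum_j|\beta_j|=m$. Since $F$ is smooth, $\|F^{(k)}(f)\|_{L^\infty}$ is controlled by the supremum appearing in the statement. The Gagliardo--Nirenberg inequality on $\mathbb T^3$ with $p_j=2m/|\beta_j|$ gives $\|\partial^{\beta_j}f\|_{L^{p_j}}\lesssim\|f\|_m^{|\beta_j|/m}\|f\|_{L^\infty}^{1-|\beta_j|/m}$, and Hölder's inequality (using $\sum_j 1/p_j=1/2$) then yields $\|\partial^{\beta_1}f\cdots\partial^{\beta_k}f\|_{L^2}\lesssim\|f\|_m\|f\|_{L^\infty}^{k-1}$. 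Summing over the finite set of Faà di Bruno tuples and over $|\alpha|\leq m$, and handling the $L^2$ piece separately by writing $F(f)=\bigl(\int_0^1 F'(tf)\,dt\bigr)f$ (where the assumption $F(0)=0$ is crucial to eliminate a constant term), one obtains the claimed bound at the integer level with exponent at most $m\leq[s]+1$.

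For non-integer $s$, set $m=[s]$ and $\sigma=s-m\in(0,1)$. I would use the telescoping Littlewood--Paley identity, which is valid precisely because $F(0)=0$ and $S_{-1}f=0$:
\[
F(f)=\sum_{q\geq-1}\bigl(F(S_{q+1}f)-F(S_qf)\bigr)=\sum_{q\geq-1}M_q(f)\Delta_qf,\qquad M_q(f):=\int_0^1 F'(S_qf+t\Delta_qf)\,dt.
\]
The bound $\|M_q(f)\|_{L^\infty}\leq\sup_{|t|\leq\|f\|_{L^\infty}}|F'(t)|$ is immediate, and estimates on $\nabla^k M_q(f)$ follow from the integer-case result applied to $F'\circ f$, each derivative contributing at most one further factor of $(1+\|f\|_{L^\infty})$. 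Expanding $\Lambda^s(M_q(f)\Delta_qf)$ via a Besov-style Leibniz rule, using the Bernstein inequality $\|\Lambda^s\Delta_qf\|_{L^2}\sim 2^{qs}\|\Delta_qf\|_{L^2}$, and summing in $q$ via quasi-orthogonality yields $\|F(f)\|_s\leq C(1+\|f\|_{L^\infty})^{[s]+1}\|f\|_s$.

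The main obstacle is the non-integer step: one has to book-keep powers of $(1+\|f\|_{L^\infty})$ carefully so that the final exponent is exactly $[s]+1$ and no larger. A cleaner alternative that avoids Littlewood--Paley entirely is to use the Sobolev--Slobodeckij characterization $\|f\|_s^2\sim\|f\|_m^2+\iint_{\mathbb T^3\times\mathbb T^3}|D^m f(x+y)-D^m f(x)|^2|y|^{-3-2\sigma}\,dx\,dy$, insert the pointwise mean-value identity $F^{(k)}(f)(x+y)-F^{(k)}(f)(x)=\int_0^1 F^{(k+1)}\bigl(f(x)+\tau(f(x+y)-f(x))\bigr)\,d\tau\cdot(f(x+y)-f(x))$ termwise into the Faà di Bruno expansion of $D^m F(f)$, and estimate each resulting integral by $L^\infty/L^2$ Hölder splits; either route produces the stated constant $K_6$ with dependence only on $|\mathbb T^3|$, $s$, and $\sup_{k\leq[s]+2,\,|t|\leq\|f\|_{L^\infty}}|F^{(k)}(t)|$.
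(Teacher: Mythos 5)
The paper does not prove this lemma at all: it is quoted as a standard fact with a pointer to Lemma 2.4--2.6 of \cite{Wu-Zhai}, so there is no in-paper argument to compare against. Your proposal is a correct outline of the standard proof of this Moser-type composition estimate: the integer case by Fa\`a di Bruno together with Gagliardo--Nirenberg interpolation between $\|f\|_{L^\infty}$ and $\|f\|_m$ (with $F(0)=0$ used to write $F(f)=f\int_0^1F'(tf)\,dt$ for the $L^2$ piece), and the fractional case by Meyer's telescoping identity $F(f)=\sum_q M_q(f)\Delta_q f$ with Bernstein bounds $\|\nabla^kM_q\|_{L^\infty}\lesssim 2^{qk}(1+\|f\|_{L^\infty})^{k}\sup_{j\le k+1}|F^{(j)}|$ for $k\le[s]+1$, which is exactly where the exponent $[s]+1$ and the dependence on $F^{(k)}$ for $k\le[s]+2$ come from. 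The only imprecision is your remark that the bounds on $\nabla^kM_q(f)$ ``follow from the integer-case result applied to $F'\circ f$'': they actually follow from the chain rule applied to $F'(S_qf+t\Delta_qf)$ combined with $\|\nabla^kS_qf\|_{L^\infty}\lesssim 2^{qk}\|f\|_{L^\infty}$, but this is the standard computation and does not affect correctness.
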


The following Poincar\'e type inequality is cited from Danchin--Mucha \cite{Danchin-Mucha} (see Lemma 5.1 there).

\begin{lemma} \label{Poincare-inequality}
Let $f\in L^2(\Omega)$ be a nonnegative and nonzero measurable function on an open and bounded domain $\Omega$ with $\partial\Omega\in C^1$.
Then there is a constant $K_7$ depending only on $\Omega$ such that for any $z\in H^1(\Omega)$, it holds that
$$
\|z\|_{0}\leq\frac{1}{M}\left|\int_{\Omega}fzdx\right|+K_7\left(1+\frac{1}{M}\|M-f\|_{0}\right)\|\nabla z\|_{0}, \quad \text{with}\ M:=\int_{\Omega}fdx.
$$
\end{lemma}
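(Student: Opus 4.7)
The plan is to reduce this weighted inequality to the classical Poincaré–Wirtinger inequality on $\Omega$. The quantity $\frac{1}{M}\int_\Omega fz\,dx$ is a weighted average of $z$ against the (finite, positive) measure $f\,dx$, so one expects $\|z\|_0$ to be controlled by this weighted average plus a gradient term, exactly as in the ordinary Poincaré inequality where the weight is the uniform $dx/|\Omega|$. The only additional work is to compare the $f$-average with the uniform average, and this comparison is responsible for the $\|M-f\|_0$ factor on the right.

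First I would introduce the usual mean $\bar z:=\frac{1}{|\Omega|}\int_\Omega z\,dx$. Since $\partial\Omega\in C^1$, the classical Poincaré–Wirtinger inequality on $\Omega$ provides a constant $C_P=C_P(\Omega)$ with $\|z-\bar z\|_0\le C_P\|\nabla z\|_0$. By the triangle inequality,
\begin{equation*}
\|z\|_0\le\|z-\bar z\|_0+|\Omega|^{1/2}|\bar z|,
\end{equation*}
so the task reduces to estimating $|\bar z|$ in terms of the right-hand side of the claimed bound.

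The key algebraic identity is
\begin{equation*}
\int_\Omega fz\,dx=M\bar z+\int_\Omega f(z-\bar z)\,dx.
\end{equation*}
Because $z-\bar z$ has zero mean on $\Omega$, the constant $M$ may be subtracted from $f$ without changing the last integral, namely $\int_\Omega f(z-\bar z)\,dx=\int_\Omega(M-f)(\bar z-z)\,dx$. Cauchy–Schwarz together with the Poincaré bound then gives
\begin{equation*}
\left|\int_\Omega f(z-\bar z)\,dx\right|\le\|M-f\|_0\,\|z-\bar z\|_0\le C_P\|M-f\|_0\,\|\nabla z\|_0.
\end{equation*}
Solving the identity for $M\bar z$ and dividing by $M>0$ (legitimate since $f\ge 0$ and $f\not\equiv 0$) yields
\begin{equation*}
|\bar z|\le\frac{1}{M}\left|\int_\Omega fz\,dx\right|+\frac{C_P}{M}\|M-f\|_0\,\|\nabla z\|_0.
\end{equation*}
Inserting this together with $\|z-\bar z\|_0\le C_P\|\nabla z\|_0$ into the triangle-inequality split produces the claimed estimate, with $K_7$ chosen to absorb $C_P$ and $|\Omega|^{1/2}$.

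I do not anticipate any real obstacle, as the argument is essentially mechanical. The one substantive observation is the zero-mean cancellation that allows replacing $f$ by $f-M$ inside the error integral; without this, one would only obtain a bound involving $\|f\|_0$, which is qualitatively weaker and would fail to be useful when $f$ is close to the constant $M$ (the regime in which the lemma is applied in the paper, with $f=\rho$ close to $1$). The nonnegativity and non-triviality of $f$ enter only through the requirement $M>0$ needed to divide.
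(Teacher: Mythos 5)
The paper offers no proof of this lemma --- it is quoted from Danchin--Mucha \cite{Danchin-Mucha} --- so there is no in-paper argument to compare against; judged on its own, your proof is the standard one and is correct in substance. The decomposition $z=(z-\bar z)+\bar z$, the zero-mean cancellation $\int_\Omega f(z-\bar z)\,dx=\int_\Omega(M-f)(\bar z-z)\,dx$, Cauchy--Schwarz, and Poincar\'e--Wirtinger (valid since $\partial\Omega\in C^1$) fit together exactly as you describe, and the hypotheses $f\ge 0$, $f\not\equiv 0$ are used precisely where you say, to guarantee $M>0$. One small point deserves flagging: your split gives $\|z\|_0\le\|z-\bar z\|_0+|\Omega|^{1/2}|\bar z|$, so the first term of the bound you actually obtain is $\frac{|\Omega|^{1/2}}{M}\bigl|\int_\Omega fz\,dx\bigr|$ rather than $\frac{1}{M}\bigl|\int_\Omega fz\,dx\bigr|$, and this factor of $|\Omega|^{1/2}$ cannot be ``absorbed into $K_7$'' as you claim, since $K_7$ multiplies only the gradient term. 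This is a defect of the statement as transcribed rather than of your argument: testing with $z\equiv 1$ shows that the inequality with coefficient exactly $\frac1M$ forces $|\Omega|\le 1$, so for a general bounded domain the coefficient $|\Omega|^{1/2}/M$ is the correct one. In the paper's application $\Omega=\mathbb T^3=[0,1]^3$, so $|\Omega|=1$ and the two forms coincide; nothing downstream is affected.
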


\section{A PRIORI ESTIMATES}
\label{sec3}
This section is devoted to carrying out the a priori estimate for the solutions to system \eqref{MHD1}. To this end, throughout this section,
it is always assumed that $(\rho,u,h)\in C([0,T];H^{N+d}(\mathbb T^3))$ is a smooth solution to system \eqref{MHD1} on
$\mathbb T^3\times[0,T]$ for some positive time $T$.

Denote by $e(\rho)$ the potential energy density, namely
$$
e(\rho)=2\rho\int_{1}^\rho\frac{p(s)-p(1)}{s^2}ds.
$$
Recalling that $p(\rho)$ is increasing in $\rho$, it is clear that $e(\rho)>0$ for $\rho\in(0,1)\cup(1,\infty)$ and $e(1)=0$.
Taking $L^2$ inner product of $\eqref{MHD1}_{2,3}$ with $(u,h)$ and using $\eqref{MHD1}_1$, one gets by direct calculations that
\begin{equation}\begin{split}\label{pri-1}
\frac12\frac{d}{dt}\int\Big(\rho|u|^2dx+|h|^2+e(\rho)\Big)dx+\int|\nabla h|^2dx=0.
\end{split}\end{equation}

Recalling $\int hdx=0$ (see (\ref{junzhi1})), the Poincar\'e inequality gives
\begin{equation}\begin{split}\label{pri-3}
\|h\|_0\leq C\|\nabla h\|_0,
\end{split}\end{equation}
for an absolute positive constant $C$.
Recalling $\int\rho udx=0$ and $\int\rho dx=1$ (see (\ref{junzhi1})), it follows from Lemma \ref{Poincare-inequality} that
\begin{equation}
\begin{split}\label{pri-7-22-1}
\|u\|_0\leq\left|\int \rho udx\right|+K_7\left(1+\|\rho-1\|_{L^2}\right)\|\nabla u\|_{0}\leq K_7(1+\|a\|_{L^2})\|\nabla u\|_0.
\end{split}
\end{equation}
Thanks to (\ref{pri-3}) and (\ref{pri-7-22-1}), one gets by the Poincar\'e inequality that
\begin{equation}
\label{EQUIVN}
\begin{split}
  \|\Lambda^sh\|_0\leq\|h\|_s\leq C_s\|\Lambda^sh\|_0,\quad\|\Lambda^su\|_0\leq\|u\|_s\leq C_s(1+\|a\|_{L^2})\|\Lambda^su\|_0,
\end{split}
\end{equation}
for any nonnegative integer $s$, where $C_s\geq1$ is a constant depending only on $s$.
This fact will be used throughout this section without further mentions.

\begin{lemma}\label{lemma-pri1}Assume that
\begin{equation}\begin{split}\label{pri-2}
\frac12<\rho<\frac32,\quad\mbox{on }\mathbb T^3\times[0,T].
\end{split}\end{equation}
Then, for any $3\leq s\leq N+d$, it holds that
\begin{equation*}
\begin{split}
\frac{d}{dt}\left\|\sqrt{\frac{p'}{\rho}}\Lambda^sa,\sqrt\rho\Lambda^su,\Lambda^sh\right\|^2_0+
\|\Lambda^{s+1}h\|_0^2\leq C(1+\|a,u,h\|_3^2)\|a,u,h\|_3\|a,u,h\|^2_s,
\end{split}\end{equation*}
for a positive constant $C$ depending only on $s, |w|,\beta,$ and $Q$.
\end{lemma}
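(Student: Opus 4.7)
The plan is to derive a weighted $H^s$ energy identity by applying $\Lambda^s$ to the three equations of (\ref{MHD1}), rewritten as
\begin{equation*}
\partial_t a+u\cdot\nabla a+\rho\,\mathrm{div}\,u=0,\qquad
\partial_t u+u\cdot\nabla u+\tfrac{p'(\rho)}{\rho}\nabla a=\tfrac{1}{\rho}\mathcal R,
\end{equation*}
with $\mathcal R:=(w\cdot\nabla)h+(h\cdot\nabla)h-\nabla(w\cdot h)-\tfrac12\nabla|h|^2$, and the $h$-equation as it stands, then testing them respectively with the symmetrizing multipliers $\tfrac{p'(\rho)}{\rho}\Lambda^s a$, $\rho\Lambda^s u$, and $\Lambda^s h$ before summing over $\mathbb T^3$. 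The time derivatives generate the desired weighted norm $\tfrac12\tfrac{d}{dt}\bigl\|\sqrt{p'/\rho}\,\Lambda^s a,\sqrt\rho\,\Lambda^s u,\Lambda^s h\bigr\|_0^2$ together with two weight-derivative pieces $\int\partial_t(p'/\rho)|\Lambda^s a|^2\,dx$ and $\int\partial_t\rho\,|\Lambda^s u|^2\,dx$; after replacing $\partial_t\rho$ via the density equation and invoking the pointwise bound $\tfrac12<\rho<\tfrac32$ together with $Q,\alpha_1,\alpha_2$, both are controlled by $C\|u\|_3\|a,u,h\|_s^2$. The dissipation $\|\Lambda^{s+1}h\|_0^2=\|\nabla\Lambda^s h\|_0^2$ on the left emerges from the $-\Delta h$ term.

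Two structural cancellations then eliminate the leading-order couplings. The acoustic cancellation: the density equation contributes $\int p'(\rho)\Lambda^s a\,\mathrm{div}\Lambda^s u\,dx$, while the pressure term in the momentum equation contributes $\int p'(\rho)\Lambda^s u\cdot\nabla\Lambda^s a\,dx$; integration by parts shows these sum to the manageable $-\int\nabla p'(\rho)\cdot\Lambda^s u\,\Lambda^s a\,dx$, which is bounded by $C\|a\|_3\|u\|_s\|a\|_s$. The magnetic coupling cancellation: using that $w$ is constant and integrating by parts, the leading pieces
\begin{equation*}
\int\Lambda^s u\cdot\bigl[(w\cdot\nabla)\Lambda^s h-\nabla(w\cdot\Lambda^s h)\bigr]dx\;\text{ and }\;\int\Lambda^s h\cdot\bigl[(w\cdot\nabla)\Lambda^s u-w\,\mathrm{div}\Lambda^s u\bigr]dx,
\end{equation*}
arising from $\mathcal R/\rho$ in the momentum equation and from the forcing $(w\cdot\nabla)u-w\,\mathrm{div}\,u$ in the $h$-equation, sum exactly to zero.

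All remaining terms are transport remainders, commutators, or purely nonlinear $h$-quadratic contributions. The transport pieces reduce via $\int(\cdot)u\cdot\nabla(\cdot)=-\tfrac12\int\mathrm{div}\,u\,|\cdot|^2$ to bounds of the form $\|u\|_3\|a,u,h\|_s^2$. The commutators $[\Lambda^s,u\cdot\nabla]a$, $[\Lambda^s,\rho]\mathrm{div}\,u$, $[\Lambda^s,u\cdot\nabla]u$, $[\Lambda^s,p'(\rho)/\rho]\nabla a$, $[\Lambda^s,1/\rho]\mathcal R$, and $[\Lambda^s,u\cdot\nabla]h$ are estimated via the Moser-type estimate of Lemma \ref{Hk-estimate-7-8} combined with the product rule of Lemma \ref{Hk-estimate}, while the $H^s$-bounds for composite functions such as $p'(1+a)/(1+a)$ and $1/(1+a)$ follow from Lemma \ref{Hk-estimate-1}, whose constants depend only on $Q,\alpha_1,\alpha_2$ by virtue of $\tfrac12<\rho<\tfrac32$. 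The $L^\infty$ factors come from the embedding $H^3(\mathbb T^3)\hookrightarrow W^{1,\infty}(\mathbb T^3)$, supplying the $\|a,u,h\|_3$ multiplier.

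The main technical obstacle lies in the top-order nonlinear magnetic terms $(h\cdot\nabla)h$ and $\nabla|h|^2$ in $\mathcal R$, and $(h\cdot\nabla)u-h\,\mathrm{div}\,u$ in the $h$-equation, whose naive Cauchy--Schwarz bound produces a $\|h\|_{s+1}$ factor absent from $\|a,u,h\|_s$. This is resolved by moving the problematic derivative onto the $h$-side through $\mathrm{div}\,h=0$ and then applying Young's inequality $\|h\|_3\|h\|_{s+1}\|u\|_s\le\epsilon\|h\|_{s+1}^2+C_\epsilon\|h\|_3^2\|u\|_s^2$: the first summand is absorbed into the dissipation $\|\Lambda^{s+1}h\|_0^2$ on the left (using $\|h\|_{s+1}\sim\|\Lambda^{s+1}h\|_0$ by Poincar\'e, since $\int h\,dx=0$), while the second, combined with the elementary bound $x^2\le x(1+x^2)$, is precisely what yields the $(1+\|a,u,h\|_3^2)\|a,u,h\|_3$ prefactor in the statement. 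Collecting all the estimates produces the claimed differential inequality.
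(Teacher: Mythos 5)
Your proposal is correct, and the underlying method — weighted $H^s$ energy with multipliers $\sqrt{p'/\rho}$ and $\sqrt\rho$, exact cancellation of the acoustic and linear magnetic couplings, absorption of the top-order nonlinear magnetic terms into the dissipation via $\|h\|_{s+1}\sim\|\Lambda^{s+1}h\|_0$ and Young's inequality, Moser-type commutator bounds — is the same as the paper's. But the \emph{organization} of the symmetrization differs in a substantive way. You divide the momentum equation by $\rho$ before applying $\Lambda^s$ and test with $\rho\Lambda^s u$, and you test the continuity equation (in the form $\partial_t a+u\cdot\nabla a+\rho\,\mathrm{div}\,u=0$) with $\tfrac{p'}{\rho}\Lambda^s a$; the paper instead keeps $\rho\partial_t u$ intact, tests $\eqref{MHD1}_{2,3}$ with $\Lambda^s u,\Lambda^sh$ only, and then recovers the $-\tfrac12\tfrac{d}{dt}\int\tfrac{p'}{\rho}|\Lambda^s a|^2dx$ piece by substituting the continuity equation into $\rho\Lambda^s\mathrm{div}\,u$ inside the pressure term $\mathrm{I}_3$. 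The practical consequence is that the paper's route creates the commutator $[\Lambda^s,\rho]u_t$, whose estimate forces a separate digression bounding $\|u_t\|_{L^\infty}$ and $\|u_t\|_{s-1}$ via the momentum equation (and that is where the $(1+\|a,u,h\|_3^2)$ prefactor first appears in the paper); your route never meets $u_t$ and instead produces commutators $[\Lambda^s,\tfrac{p'}{\rho}]\nabla a$ and $[\Lambda^s,\tfrac1\rho]\mathcal R$, all handled by Lemmas \ref{Hk-estimate-7-8} and \ref{Hk-estimate-1} without any time-derivative estimates. Two small remarks: (a) your ``transport'' reduction $\int(\cdot)\,u\cdot\nabla(\cdot)=-\tfrac12\int\mathrm{div}\,u\,|\cdot|^2$ is literally true only for the $h$-block; for the $a$- and $u$-blocks the weights $\tfrac{p'}{\rho}$ and $\rho$ do not factor out, and one must combine transport and weight-derivative pieces (for the $u$-block they in fact cancel exactly, since $\partial_t\rho=-\mathrm{div}(\rho u)$; for the $a$-block one recovers the paper's $\mathrm{I}_{3,a}$ via $(\tfrac{p'}{\rho})_t+\mathrm{div}(\tfrac{p'}{\rho}u)=(\tfrac{2p'}{\rho}-p'')\mathrm{div}\,u$) — either way the bound $\|u\|_3\|a,u,h\|_s^2$ holds; (b) the constant in the statement can be taken to depend only on $s,|w|,\beta,Q$ as claimed, since $\tfrac12<\rho<\tfrac32$ already bounds $\tfrac1\rho$ and all derivatives of $\tfrac{p'}{\rho}$ in terms of $Q$, so no separate appeal to $\alpha_1,\alpha_2$ is needed in this lemma.
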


\begin{proof}
Applying $\Lambda^s$ with $1\leq s\leq N+d$ to $\eqref{MHD1}_{2,3}$ and taking $L^2$ inner product with $\Lambda^su$ and $\Lambda^s h,$ respectively, that is performing $\int [\Lambda^s\eqref{MHD1}_2\cdot \Lambda^s u+\Lambda^s\eqref{MHD1}_3\cdot\Lambda^s h]dx$, then integrating by parts and using $\eqref{MHD1}_{1,4}$ yield
\begin{equation}\begin{split}\label{pri-4}
\frac12\frac{d}{dt}&\|\sqrt\rho\Lambda^su,\Lambda^sh\|^2_0+\|\Lambda^{s+1}h\|_0^2=\int\Big[\Big(-[\Lambda^s,\rho]u_t-[\Lambda^s,\rho u\cdot\nabla]u\\
&-p'\nabla\Lambda^s\rho-[\Lambda^s,p']\nabla\rho+\Lambda^s(h\cdot\nabla h)-\frac12\Lambda^s\nabla(|h|^2)\Big)\cdot\Lambda^s u\\
&-\Big(\Lambda^s(u\cdot\nabla h)-\Lambda^s(h\cdot\nabla u)+\Lambda^s(h\textrm{div}u)\Big)\cdot\Lambda^s h\Big]dx=:\sum_{j=1}^{9}\textrm{I}_j,
\end{split}\end{equation}
where
\begin{align*}
  &\textrm{I}_1:=-\int[\Lambda^s,\rho]u_t\cdot \Lambda^sudx,&&\textrm{I}_2:=-\int[\Lambda^s,\rho u\cdot\nabla]u\cdot \Lambda^sudx,\\
   &\textrm{I}_3:=-\int p'\nabla\Lambda^s\rho\cdot \Lambda^sudx,&&\textrm{I}_4:=-\int [\Lambda^s,p']\nabla\rho\cdot\Lambda^s udx,\\
   &\textrm{I}_5:=\int \Lambda^s(h\cdot\nabla h)\cdot\Lambda^s udx,&&\textrm{I}_6:= -\frac12\int \Lambda^s\nabla(|h|^2)\cdot\Lambda^s udx,\\
   &\textrm{I}_7:=-\int\Lambda^s(u\cdot\nabla h)\cdot\Lambda^shdx,&&\textrm{I}_8:=\int\Lambda^s(h\cdot\nabla u)\cdot\Lambda^shdx,\\ &\textrm{I}_9:=-\int\Lambda^s(h\textrm{div}u)\cdot\Lambda^s hdx.&&
\end{align*}
Estimates on $\textrm{I}_j$ $(j=1,2\cdots9)$ are given as follows.

We first estimate $\textrm{I}_3$.
Using $\eqref{MHD1}_{1}$, one has
\begin{align*}
  \rho\Lambda^s\text{div}u=&\Lambda^s(\rho\text{div}u)-[\Lambda^s,\rho]\text{div}u=-\Lambda^s(\partial_t\rho+u\cdot\nabla\rho)-[\Lambda^s,\rho]
  \text{div}u\\
  =&-(\partial_t\Lambda^s\rho+u\cdot\nabla\Lambda^s\rho)-[\Lambda^s,u\cdot\nabla]\rho-[\Lambda^s,\rho]\text{div}u.
\end{align*}
Noticing that
$$
\left(\frac{p'}{\rho}\right)_t+\textrm{div}\left(\frac{p'}{\rho}u\right)=\left(\frac{2p'}{\rho}-p''\right)\textrm{div}u,
$$
it follows from integrating by parts that
\begin{equation*}\begin{split}
\textrm{I}_3&=\int\Lambda^s\rho\big(p'\textrm{div}\Lambda^su+\nabla p'\cdot \Lambda^su\big)dx\\
&=\int\left[-\frac{p'}{\rho}\Lambda^s\rho\Big(\partial_t\Lambda^s\rho+u\cdot\nabla\Lambda^s\rho
+[\Lambda^s,u\cdot\nabla]\rho+[\Lambda^s,\rho]\textrm{div}u\Big)+\Lambda^s\rho\nabla p'\cdot\Lambda^su\right]dx\\
&=-\frac12\frac{d}{dt}\int\frac{p'}{\rho}|\Lambda^s\rho|^2dx+\frac12\int\left[\left(\frac{p'}{\rho}\right)_t+\textrm{div}
\left(\frac{p'}{\rho}u\right)\right]|\Lambda^s\rho|^2dx+\textrm{I}_{3,b}+\textrm{I}_{3,c}+\textrm{I}_{3,d}\\
&=-\frac12\frac{d}{dt}\int\frac{p'}{\rho}|\Lambda^s\rho|^2dx+\frac12\int\left(\frac{2p'}{\rho}-p''\right)\textrm{div}u|\Lambda^s\rho|^2dx+\textrm{I}_{3,b}+\textrm{I}_{3,c}+\textrm{I}_{3,d}\\
&=-\frac12\frac{d}{dt}\int\frac{p'}{\rho}|\Lambda^sa|^2dx+\textrm{I}_{3,a}+\textrm{I}_{3,b}+\textrm{I}_{3,c}+\textrm{I}_{3,d},
\end{split}\end{equation*}
where
\begin{equation*}\begin{split}
&\textrm{I}_{3,a}:=\frac12\int\left(\frac{2p'}{\rho}-p''\right)\textrm{div}u|\Lambda^s\rho|^2dx,\quad \textrm{I}_{3,b}:=-\int\frac{p'}{\rho}\Lambda^s\rho[\Lambda^s,u\cdot\nabla]\rho dx,\\
&\textrm{I}_{3,c}:=-\int\frac{p'}{\rho}\Lambda^s\rho[\Lambda^s,\rho]\textrm{div}udx,\quad
\textrm{I}_{3,d}:=\int\Lambda^s\rho\nabla p'\cdot\Lambda^su dx.
\end{split}\end{equation*}

Since $\rho\in(\frac12,\frac32)$ and $p(\rho)$ is a smooth function in $\rho$, it follows from Lemma \ref{Hk-estimate-7-8}
and the embedding and Young inequalities that
\begin{align*}
|\textrm{I}_{3,a}|\lesssim&\left\|\frac{2p'}{\rho}-p''\right\|_{L^\infty}\|Du\|_{L^\infty}\|a\|^2_s
\lesssim\|Du\|_{L^\infty}\|a\|^2_s\lesssim\|u\|_{3}\|a\|^2_s,\\
|\textrm{I}_{3,b}|\lesssim&\left\|\frac{p'}{\rho}\right\|_{L^\infty}\|a\|_s\|[\Lambda^s,u\cdot\nabla]\rho\|_0
\lesssim\|a\|_s(\|Da\|_{L^\infty}\|u\|_s+\|a\|_{s}\|Du\|_{L^\infty})\\
\lesssim&\|a,u\|_{3}\|a,u\|^2_s,\\
|\textrm{I}_{3,c}|\lesssim&\left\|\frac{p'}{\rho}\right\|_{L^\infty}\|a\|_s\|[\Lambda^s,\rho]\textrm{div}u\|_0
\lesssim\|a\|_s\|[\Lambda^s,\rho]\textrm{div}u\|_0\\
\lesssim&\|a\|_s(\|Da\|_{L^\infty}\|u\|_s+\|a\|_{s}\|Du\|_{L^\infty})\lesssim\|a,u\|_{3}\|a,u\|^2_s,
\end{align*}
and
$$
|\textrm{I}_{3,d}|\leq\|\nabla p'\|_{L^\infty}\|\Lambda^sa\|_0\|\Lambda^su\|_0\lesssim\|D\rho\|_{L^\infty}\|\Lambda^sa\|_0\|\Lambda^su\|_0\lesssim\|a\|_{3}\|a,u\|^2_s.
$$
Therefore, one has
$$
\left|\textrm{I}_3+\frac12\frac{d}{dt}\int\frac{p'}{\rho}|\Lambda^sa|^2dx\right|\lesssim\|a,u\|_3\|u,u\|_s^2.
$$

For $\textrm{I}_2$ and $\textrm{I}_4,$ similar to the estimates on $\textrm{I}_{3,a}$--$\textrm{I}_{3,d}$, it follows from
Lemma \ref{Hk-estimate}, Lemma \ref{Hk-estimate-7-8}, Lemma \ref{Hk-estimate-1}, \eqref{pri-2}, and the embedding and Young inequalities
that
\begin{align*}
|\textrm{I}_{2}|=&\left|\int[\Lambda^s,\rho u\cdot\nabla]u\cdot\Lambda^sudx\right|
\leq\|[\Lambda^s,\rho u\cdot\nabla]u\|_0\|\Lambda^su\|_0\\
\lesssim&(\|D(\rho u)\|_{L^\infty}\|u\|_s+\|\rho u\|_s\|Du\|_{L^\infty})\|u\|_s\\
\lesssim& (\|Da\|_{L^\infty}\|u\|_{L^\infty}+\|\rho\|_{L^\infty}\|Du\|_{L^\infty})\|u\|_s^2\\
&+[\|\rho\|_{L^\infty}\|u\|_s+(1+\|a\|_s)\|u\|_{L^\infty}]\|Du\|_{L^\infty} \|u\|_s\\
\lesssim&(1+\|u\|_2)\|a,u\|_3\|a, u\|^2_s+(1+\|a\|_s)\|u\|_2\|u\|_3\|u\|_s\\
\lesssim&(1+\|u\|_2)\|a,u\|_3\|a,u\|_s^2
\end{align*}
and
\begin{align*}
|\textrm{I}_{4}|=&\left|\int[\Lambda^s,p']\nabla\rho\cdot\Lambda^sudx\right|
\leq\|[\Lambda^s,p']\nabla\rho\|_0\|\Lambda^su\|_0\\
\lesssim&(\|\nabla p'\|_{L^\infty}\|\Lambda^sa\|_0+\|\Lambda^sp'\|_{0}\|Da\|_{L^\infty})\|u\|_s\\
\lesssim&(\|\nabla a\|_{L^\infty}\|\Lambda^sa\|_0+\|\Lambda^s(p'(1+a)-p'(1))\|_{0}\|Da\|_{L^\infty})\|u\|_s\\
\lesssim&\|a\|_{s}\|Da\|_{L^\infty}\|u\|_s\lesssim\|a\|_{3}\|a,u\|^2_s.
\end{align*}

We now deal with $\textrm{I}_{5}$--$\textrm{I}_{9}.$ It follows from Lemma \ref{Hk-estimate}, \eqref{EQUIVN}, and the embedding and
Young inequalities that
\begin{equation*}\begin{split}
|\textrm{I}_{5}|&=\left|\int\Lambda^s(h\cdot\nabla h)\cdot\Lambda^s udx\right|
\leq\|\Lambda^s(h\cdot\nabla h)\|_0\|\Lambda^su\|_0\\
&\lesssim(\|h\|_{L^\infty}\|\nabla h\|_s+\|\nabla h\|_{L^\infty}\|h\|_s)\|u\|_s\lesssim\|h\|_{3}\|h\|_{s+1}\|u\|_s,\\
&\lesssim\|h\|_{3}\|\Lambda^{s+1}h\|_0\|u\|_s\leq C_\epsilon\|h\|^2_{3}\|u\|^2_s+\epsilon\|\Lambda^{s+1}h\|^2_0\\
\end{split}\end{equation*}
and similarly
\begin{align*}
  |I_6|=&\frac12\left|\int\Lambda^s\nabla|h|^2\cdot\Lambda^sudx\right|=\left|\int\Lambda^s((\nabla h)^Th)\cdot\Lambda^sudx\right|\\
  \lesssim&\epsilon\|\Lambda^{s+1}h\|_0^2+C_\epsilon\|h\|_3^2\|u\|_s^2,
\end{align*}
for any $\epsilon>0$. For $I_7$ and $I_8$, integrating by parts, applying Lemma \ref{Hk-estimate}, and using the embedding and
Young inequalities, one deduces for any $\epsilon>0$ that
\begin{equation*}\begin{split}
|\textrm{I}_{7}|&=\left|\int\Lambda^s(u\cdot\nabla h)\cdot\Lambda^s hdx\right|=\left|\int\Lambda^{s-1}(u\cdot\nabla h)\cdot\Lambda^{s+1} hdx\right|\\
&\lesssim\|u,Dh\|_{L^\infty}\|u,Dh\|_{s-1}\|\Lambda^{s+1}h\|_0\leq C_\epsilon\|u,h\|^2_{3}\|u,h\|^2_{s}+\epsilon\|\Lambda^{s+1}h\|^2_0,\\
|\textrm{I}_{8}|&=\left|\int\Lambda^s(h\cdot\nabla u)\cdot\Lambda^s hdx\right|=\left|\int\Lambda^{s-1}(h\cdot\nabla u)\cdot\Lambda^{s+1} hdx\right|\\
&\lesssim\|Du,h\|_{L^\infty}\|Du,h\|_{s-1}\|\Lambda^{s+1}h\|_0\leq C_\epsilon\|u,h\|^2_{3}\|u,h\|^2_s+\epsilon\|\Lambda^{s+1}h\|^2_0,
\end{split}\end{equation*}
and similarly
$$
|\textrm{I}_{9}|=\left|\int\Lambda^s(h\textrm{div}u)\cdot\Lambda^s hdx\right|\leq C_\epsilon\|u,h\|^2_{3}\|u,h\|^2_s+\epsilon\|\Lambda^{s+1}h\|^2_0.
$$

It remains to estimate $\textrm{I}_{1}.$ To this end, we first estimate $\|u_t\|_{L^\infty}.$ One deduces by \eqref{pri-2}
and the bedding inequality that
\begin{equation}\begin{split}\label{pri-5}
\|u_t\|_{L^\infty}&=\left\|-(u\cdot\nabla)u-\frac{p'}{\rho}\nabla a+\rho^{-1}(H\cdot\nabla)h-\rho^{-1}(\nabla h)^{\top}H\right\|_{L^\infty}\\
&\lesssim\|u\|_{L^\infty}\|Du\|_{L^\infty}+\|Da\|_{L^\infty}+(1+\|h\|_{L^\infty})\|Dh\|_{L^\infty}\\
&\lesssim(1+\|u,h\|_{2})\|a,u,h\|_3.
\end{split}\end{equation}
Next, we estimate $\|u_t\|_{s-1}.$ Note that \eqref{pri-2} implies $a\in(-\frac12,\frac12)$. Thus, it follows from Lemma \ref {Hk-estimate-1}
that
\begin{equation*}\begin{split}
\left\|\frac{p'}{\rho}\right\|_{s-1}&=\left\|\frac{p'(a+1)}{a+1}-p'(1)+p'(1)\right\|_{s-1}\lesssim\left\|\frac{p'(a+1)}{a+1}-p'(1)\right\|_{s-1}+1\\
&\lesssim\|a\|_{s-1}+1
\end{split}\end{equation*}
and
\begin{equation*}\begin{split}
\left\|\frac{1}{\rho}\right\|_{s-1}&=\|(a+1)^{-1}-1+1\|_{s-1}\lesssim\|(a+1)^{-1}-1\|_{s-1}+1\\
&\lesssim\|a\|_{s-1}+1.
\end{split}\end{equation*}
Thanks to the above two estimates, recalling $s\geq3$ and $\rho\in(\frac12,\frac32)$, it follows from Lemma \ref{Hk-estimate} and the Young inequality that
\begin{equation}\begin{split}\label{pri-6}
\|u_t\|_{s-1}=&\left\|-u\cdot\nabla u-\frac{p'}{\rho}\nabla a+\rho^{-1}H\cdot\nabla h-\rho^{-1}(\nabla h)^{\top}H\right\|_{s-1}\\
\lesssim&\|u\|_{L^\infty}\|Du\|_{s-1}+\|u\|_{s-1}\|Du\|_{L^\infty}+\left\|\frac{p'}{\rho}\right\|_{s-1}\|Da\|_{L^\infty}\\
&+\left\|\frac{p'}{\rho}\right\|_{L^\infty}\|Da\|_{s-1}+\left\|\frac1\rho\right\|_{L^\infty}
\|H\cdot \nabla h, (\nabla h)^\top H\|_{s-1}\\
&+\left\|\frac1\rho\right\|_{s-1}
\|H\cdot\nabla h, (\nabla h)^\top H\|_{L^\infty}\\
\lesssim&\|u\|_3\|u\|_s+(1+\|a\|_s)\|a\|_3+\|a\|_s+\|h+w\|_{L^\infty}\|\nabla h\|_{s-1}\\
&+\|h+w\|_{s-1}\|\nabla h\|_{L^\infty}
+(1+\|a\|_s)\|\nabla h\|_{L^\infty}\|h+w\|_{L^\infty}\\
\lesssim&\|u\|_3\|u\|_s+(1+\|a\|_s)\|a\|_3+\|a\|_s+(1+\|h\|_2)\|h\|_{s}\\
&+(1+\|h\|_{s})\|h\|_3
+(1+\|a\|_s)\|h\|_3(1+\|h\|_3)\\
\lesssim&(1+\|a,u,h\|_3^2)\|a,u,h\|_s.
\end{split}\end{equation}
This together with \eqref{pri-5}, Lemma \ref{Hk-estimate-7-8}, and the Young inequality yields
\begin{equation*}\begin{split}
|\textrm{I}_{1}|=&\left|\int[\Lambda^s,\rho]u_t\cdot\Lambda^s udx\right|
=\left|\int[\Lambda^s,a]u_t\cdot\Lambda^s udx\right|\\
\leq&\|[\Lambda^s,a]u_t\|_0\|\Lambda^s u\|_0\lesssim
(\|a\|_s\|u_t\|_{L^\infty}+\|Da\|_{L^\infty}\|u_t\|_{s-1})\|u\|_s\\
\lesssim&\|a\|_s(1+\|u,h\|_2)\|a,u,h\|_3\|u\|_s+
\|a\|_3(1+\|a,u,h\|_3^2)\|a,u,h\|_s\|u\|_s\\
\lesssim&(1+\|a,u,h\|_3^2)\|a,u,h\|_3\|a,u,h\|^2_s.
\end{split}\end{equation*}

Plugging the estimates for $\textrm{I}_{1}$--$\textrm{I}_{9}$ into \eqref{pri-4} and choosing $\epsilon$ sufficiently small lead to the conclusion.
\end{proof}

Due to the lack of dissipation terms for the density and velocity field, one needs to
exploit the hidden dissipation benefit from the background magnetic
field. To this end, we rewrite system $\eqref{MHD1}$ as
\begin{equation}\label{MHD2}
\left\{
\begin{array}{rl}
&\hspace{-0,5cm}\partial_ta+\textrm{div}u=R_1,
\smallskip\\
&\hspace{-0,5cm}\partial_t u
+\beta\nabla a=(w\cdot \nabla)h-\nabla(w\cdot h)+R_2,
\medskip\\
&\hspace{-0,5cm}\partial_t h-\Delta h
=(w\cdot \nabla)u-w\textrm{div}u+R_3,
\medskip\\
&\hspace{-0,5cm}{\mathop{\rm div}}\, h=0,
\medskip\\
&\hspace{-0,5cm}(\rho,u,h)_{|t=0}=(\rho_0,u_0,h_0),
\end{array}
\right.
\end{equation}
where $\beta:=p'(1)>0$, and $R_1,R_2,R_3$ are the nonlinear terms expressed as
\begin{equation*}\begin{split}
&R_1:=-\textrm{div}(a u),\\
&R_2:=-a\partial_tu-(p'(a+1)-p'(1))\nabla a-\rho(u\cdot\nabla)u+(h\cdot\nabla)h-\nabla\left(\frac{|h|^2}{2}\right),\\
&R_3:=-(u\cdot\nabla)h+(h\cdot\nabla)u-h\textrm{div}u.
\end{split}\end{equation*}

Estimates on $R_1,R_2$ and $R_3$ are given in the following lemma.

\begin{lemma}\label{lemma-2025-7-22}
It holds that for $l\geq0$ that
\begin{equation*}
\|R_1\|_l\leq C\|a,u\|_{2}\|a,u\|_{l+1},\quad
\|R_3\|_l\leq C\|u,h\|_3\|u,h\|_{l+1},
\end{equation*}
for a positive constant $C$ depending only on $l$.
Assuming further that \eqref{pri-2} holds, then for $2\leq m\leq N+d$, it holds that
\begin{equation*}
\|R_2\|_m\leq C(1+\|a,u,h\|_3^2)\|a,u,h\|_3\|a,u,h\|_{m+1},
\end{equation*}
for a positive constant $C$ depending only on $m, |w|,\beta$, and $Q$.
\end{lemma}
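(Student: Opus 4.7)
\medskip

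\noindent\textbf{Proof proposal for Lemma \ref{lemma-2025-7-22}.} The strategy is routine: every summand in $R_1$, $R_2$, $R_3$ is either a bilinear product or a composition (through the smooth nonlinearity $p'$) of $a$, $u$, $h$ and their first derivatives, so Lemma \ref{Hk-estimate} (product rule), Lemma \ref{Hk-estimate-1} (composition estimate), and the Sobolev embedding $H^2(\mathbb{T}^3)\hookrightarrow L^\infty(\mathbb{T}^3)$ are the only tools I need. The only mildly non-routine point is the term $a\,\partial_t u$ in $R_2$, which I will handle by substituting $\partial_t u$ from the momentum equation (exactly as was already done to obtain \eqref{pri-6}).

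For $R_1=-\mathrm{div}(au)$, I simply write $\|R_1\|_l\leq\|au\|_{l+1}$ and apply Lemma \ref{Hk-estimate} together with $\|a\|_{L^\infty}+\|u\|_{L^\infty}\lesssim\|a,u\|_2$ to obtain $\|R_1\|_l\lesssim\|a,u\|_2\|a,u\|_{l+1}$. For $R_3$, each of the three summands $(u\cdot\nabla)h$, $(h\cdot\nabla)u$, $h\,\mathrm{div}\,u$ is bilinear of the form (something)$\times$(first derivative), and Lemma \ref{Hk-estimate} combined with Sobolev embedding gives terms of the type $\|u,h\|_{L^\infty}\|Du,Dh\|_l+\|Du,Dh\|_{L^\infty}\|u,h\|_l\lesssim\|u,h\|_3\|u,h\|_{l+1}$, yielding the desired bound.

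For $R_2$, I treat the five terms separately. The pressure-type term $(p'(a+1)-p'(1))\nabla a$: since $F(s):=p'(1+s)-p'(1)$ is smooth with $F(0)=0$ and $\|a\|_{L^\infty}\leq 1/2$ by \eqref{pri-2}, Lemma \ref{Hk-estimate-1} gives $\|F(a)\|_m\lesssim\|a\|_m$ with a constant depending on $Q$, and then Lemma \ref{Hk-estimate} together with $\|F(a)\|_{L^\infty}\lesssim\|a\|_2$ produces a contribution of size $\|a\|_3\|a\|_{m+1}$. The convection term $\rho(u\cdot\nabla)u=(1+a)(u\cdot\nabla)u$ and the Maxwell-type terms $(h\cdot\nabla)h$ and $\nabla(|h|^2/2)=(\nabla h)^\top h$ are all trilinear or bilinear products, and Lemma \ref{Hk-estimate} together with $\|\rho\|_{L^\infty}\leq 3/2$ bounds each of them by $(1+\|a,u,h\|_3)\|u,h\|_3\|u,h\|_{m+1}$.

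The main obstacle is $a\,\partial_t u$, and I will handle it by the same device used in \eqref{pri-5}--\eqref{pri-6}. Namely, using the momentum equation $\partial_t u=-(u\cdot\nabla)u-\rho^{-1}p'\nabla a+\rho^{-1}(H\cdot\nabla)h-\rho^{-1}(\nabla h)^\top H$ together with the composition estimates $\|\rho^{-1}\|_m,\|p'(1+a)/\rho\|_m\lesssim 1+\|a\|_m$ from Lemma \ref{Hk-estimate-1} (valid because $\rho\in(1/2,3/2)$), exactly the same product/composition bookkeeping that led to \eqref{pri-6} yields $\|\partial_t u\|_m\lesssim(1+\|a,u,h\|_3^2)\|a,u,h\|_{m+1}$. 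Plugging this into $\|a\partial_t u\|_m\lesssim\|a\|_{L^\infty}\|\partial_t u\|_m+\|a\|_m\|\partial_t u\|_{L^\infty}$ and using \eqref{pri-5} for the $L^\infty$ norm of $\partial_t u$, I obtain the contribution $(1+\|a,u,h\|_3^2)\|a,u,h\|_3\|a,u,h\|_{m+1}$. Summing the five contributions finishes $R_2$. All constants depend only on $m$, $|w|$, $\beta$ and $Q$, as required.
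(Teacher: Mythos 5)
Your proof is essentially the same as the paper's: both estimate $R_1$ and $R_3$ by a direct application of Lemma~\ref{Hk-estimate} and Sobolev embedding, both bound the pressure term $(p'(1+a)-p'(1))\nabla a$ via Lemma~\ref{Hk-estimate-1}, and both handle the troublesome term $a\,\partial_t u$ by inserting the momentum equation and invoking exactly the estimates \eqref{pri-5}--\eqref{pri-6} for $\|u_t\|_{L^\infty}$ and $\|u_t\|_{m}$, which is precisely the device the paper reuses. One minor slip: when you bound the trilinear piece $\rho(u\cdot\nabla)u$, the product rule gives a term $\|\rho\|_m\|(u\cdot\nabla)u\|_{L^\infty}\lesssim(1+\|a\|_m)\|u\|_3^2$, so the top-order factor in that intermediate estimate should be $\|a,u,h\|_{m+1}$ rather than $\|u,h\|_{m+1}$; since your final claimed bound for $R_2$ already has $\|a,u,h\|_{m+1}$, the conclusion is unaffected.
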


\begin{proof}It follows from Lemma \ref{Hk-estimate} and $\|\cdot\|_{L^\infty}\lesssim\|\cdot\|_2$ that for $l\geq0$
\begin{equation*}\begin{split}
\|R_1\|_l\leq\|au\|_{l+1}\lesssim\|a,u\|_{L^\infty}\|a,u\|_{l+1}\lesssim\|a,u\|_{2}\|a,u\|_{l+1}
\end{split}\end{equation*}
and
\begin{equation*}\begin{split}
\|R_3\|_l\leq&\|(u\cdot\nabla)h\|_l+\|(h\cdot\nabla)u\|_l+\|h\textrm{div}u\|_{l}\\
\lesssim&\|u,\nabla h\|_{L^\infty}\|u,\nabla h\|_l+\|h,\nabla u\|_{L^\infty}\|h,\nabla u\|_l\lesssim\|u,h\|_3\|u,h\|_{l+1},
\end{split}\end{equation*}
proving the estimates for $R_1$ and $R_3$.
It remains to estimate $R_{2}.$ One deduces by \eqref{pri-5}, \eqref{pri-6}, Lemma \ref{Hk-estimate},
and the embedding and Young inequalities that
\begin{equation*}
\begin{split}
\|au_t\|_m\lesssim&\|a\|_{L^\infty}\|u_t\|_m+\|a\|_m\|u_t\|_{L^\infty}\\
\lesssim& \|a\|_3(1+\|a,u,h\|_3^2)\|a,u,h\|_{m+1}+\|a\|_m(1+\|u,h\|_{3})\|a,u,h\|_3\\
\lesssim& (1+\|a,u,h\|_3^2)\|a,u,h\|_3\|a,u,h\|_{m+1}.
\end{split}
\end{equation*}
It follows from \eqref{pri-2}, Lemma \ref{Hk-estimate-1}, and the embedding inequality that
\begin{equation*}\begin{split}
&\|(p'(a+1)-p'(1))\nabla a\|_m\\
\lesssim&\|p'(a+1)-p'(1)\|_{L^\infty}\|a\|_{m+1}+\|p'(a+1)-p'(1)\|_{m}\|\nabla a\|_{L^\infty}\\
\lesssim&\|a\|_{L^\infty}\|a\|_{m+1}+\|a\|_{m}\|\nabla a\|_{L^\infty}\lesssim\|a\|_3\|a\|_{m+1}.
\end{split}\end{equation*}
Similarly, it follows from  Lemma \ref{Hk-estimate} that
\begin{equation*}\begin{split}
&\left\|-\rho(u\cdot\nabla)u+(h\cdot\nabla)h-\nabla\left(\frac{|h|^2}{2}\right)\right\|_m\\
\lesssim& \|\rho u,Du\|_{L^\infty}\|\rho u,Du\|_{m}+\|h,Dh\|_{L^\infty}\|h,Dh\|_{m}\\
\lesssim&(\|\rho\|_{L^\infty}\|u\|_{L^\infty}+\|Du\|_{L^\infty})(\|\rho\|_{L^\infty}\|u\|_m+\|\rho\|_m\|u\|_{L^\infty}+\|u\|_{m+1})
+\|h\|_3\|h\|_{m+1}\\
\lesssim&\|u\|_3(\|u\|_{m+1}+\|\rho\|_m\|u\|_{3})+\|h\|_3\|h\|_{m+1}\\
\lesssim&\|a,u,h\|_3\|a,u,h\|_{m+1}+\|u\|_3^2(1+\|a\|_{m+1})\\
\lesssim&(1+\|a,u,h\|_3)\|a,u,h\|_3\|a,u,h\|_{m+1}.
\end{split}\end{equation*}
Combining the above three inequalities yields the estimate for $R_2$.
\end{proof}

The following lemma stating the mutual control between $\text{div}u$ and $\nabla a$ is due to the Euler structure
of subsystem $\eqref{MHD2}_{1,2}$.

\begin{lemma}\label{lemma-2025-6-27}
Suppose \eqref{pri-2} holds. Then, it holds for any $t\in[0,T]$ that
\begin{align*}
  \left|\|\Lambda^{M-r-1}\rm{div}u\|_0^2-\beta\|\Lambda^{M-r-1}\nabla a\|_0^2
  +\frac{d}{dt}\int\Lambda^{M-r-1}\rm{div}u\Lambda^{M-r-1}adx\right|\\
  \leq \|h\|_{M-r}\|a\|_{M-r}+C(1+\|a,u,h\|_3^2)\|a,u,h\|_{3}\|a,u,h\|^2_{N}
\end{align*}
and in particular
\begin{equation*}\begin{split}
&\|\mathrm{div}\Lambda^{M-r-1}u\|_0^2+\frac{d}{dt}\int\Lambda^{M-r-1}\mathrm{div}u\Lambda^{M-r-1}adx\\
\leq&\beta\|a\|_{M-r}^2
+\|h\|_{M-r}\|a\|_{M-r}+C(1+\|a,u,h\|_3^2)\|a,u,h\|_{3}\|a,u,h\|^2_{N},
\end{split}\end{equation*}
for a positive constant $C$ depending only on $N, |w|,\beta,$ and $Q$.
\end{lemma}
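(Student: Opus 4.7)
The plan is to compute $\frac{d}{dt}\int \Lambda^{M-r-1}\mathrm{div}\, u\, \Lambda^{M-r-1}a\, dx$ directly from the reformulated equations $(\text{MHD2})_{1,2}$. Applying $\mathrm{div}$ to the velocity equation and exploiting $\mathrm{div}\, h=0$, the term $(w\cdot\nabla)h$ is annihilated, leaving
\[
\partial_t \mathrm{div}\, u + \beta\Delta a = -\Delta(w\cdot h) + \mathrm{div}\, R_2.
\]
Then substituting this and $\partial_t a = -\mathrm{div}\, u + R_1$ into the time derivative of the inner product, I expect the identity
\[
\frac{d}{dt}\int\Lambda^{M-r-1}\mathrm{div}\, u\,\Lambda^{M-r-1}a\,dx = \beta\|\Lambda^{M-r-1}\nabla a\|_0^2 - \|\Lambda^{M-r-1}\mathrm{div}\, u\|_0^2 + \mathrm{J}_1 + \mathrm{J}_2,
\]
with $\mathrm{J}_1 := -\int\Lambda^{M-r-1}\Delta(w\cdot h)\Lambda^{M-r-1}a\,dx$ encoding the magnetic coupling, and $\mathrm{J}_2$ collecting the $R_1,R_2$ nonlinear remainders.

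The term $\mathrm{J}_1$ is handled by absorbing the Laplacian into two factors of $\Lambda$ and using self-adjointness of $\Lambda$ to rewrite it as
\[
\mathrm{J}_1 = \int \Lambda^{M-r}(w\cdot h)\,\Lambda^{M-r}a\,dx,
\]
and then Cauchy--Schwarz together with $\|w\cdot h\|_{M-r}\lesssim|w|\|h\|_{M-r}$ yields the claimed bound $\|h\|_{M-r}\|a\|_{M-r}$ (with the $|w|$ factor absorbed into the leading constant).

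For $\mathrm{J}_2$, I would integrate by parts once and bound the two contributions by duality:
\[
|\mathrm{J}_2|\le \|R_2\|_{M-r}\|a\|_{M-r-1}+\|u\|_{M-r}\|R_1\|_{M-r-1}.
\]
The parameter hypotheses $r+3\le L\le M-r-1$ and $M\le N-r-2$ guarantee $M-r+1\le N$, so Lemma \ref{lemma-2025-7-22} applies with $m=M-r\le N-1$ and $l=M-r-1$, giving
\[
\|R_2\|_{M-r}\le C(1+\|a,u,h\|_3^2)\|a,u,h\|_3\|a,u,h\|_{M-r+1},\qquad
\|R_1\|_{M-r-1}\le C\|a,u\|_2\|a,u\|_{M-r},
\]
and since $M-r+1\le N$ both contributions are dominated by $C(1+\|a,u,h\|_3^2)\|a,u,h\|_3\|a,u,h\|_N^2$, which is exactly the desired error term.

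Rearranging then yields the first inequality; the second ``in particular'' inequality follows immediately by dropping $-\beta\|\Lambda^{M-r-1}\nabla a\|_0^2$ to the right-hand side and estimating $\|\Lambda^{M-r-1}\nabla a\|_0^2\le\|a\|_{M-r}^2$. The computation itself is not the hard part; the main thing to track carefully is the bookkeeping of indices, ensuring that the nonlinear remainders $R_1$ and $R_2$ are always evaluated at Sobolev orders allowed by the constraints $M\le N-r-2$ and $L\le M-r-1$, so that no norm higher than $\|a,u,h\|_N$ appears on the right-hand side except through the magnetic-field coupling term $\|h\|_{M-r}\|a\|_{M-r}$ which is intentionally left standalone to feed into the later Diophantine/Poincar\'e absorption in Corollary \ref{corollary2}.
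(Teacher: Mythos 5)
Your proposal is correct and follows essentially the same route as the paper: the paper tests $\Lambda^{M-r-1}(\text{MHD2})_2$ against $\nabla\Lambda^{M-r-1}a$ and then uses $(\text{MHD2})_1$, whereas you first apply $\mathrm{div}$ to the momentum equation and then differentiate the cross term in time, but these are algebraically identical and produce the same identity $\|\Lambda^{M-r-1}\mathrm{div}\,u\|_0^2-\beta\|\nabla\Lambda^{M-r-1}a\|_0^2+\frac{d}{dt}\int\Lambda^{M-r-1}\mathrm{div}\,u\,\Lambda^{M-r-1}a\,dx = \mathrm{J}_1+\mathrm{J}_2$, with the same Cauchy--Schwarz bound on the magnetic coupling term and the same invocation of Lemma \ref{lemma-2025-7-22} (with $m=M-r$, $l=M-r-1$) for the remainders. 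One small inconsistency: you say the $|w|$ factor in $\|w\cdot h\|_{M-r}\lesssim|w|\|h\|_{M-r}$ is ``absorbed into the leading constant,'' yet then state the bound as $\|h\|_{M-r}\|a\|_{M-r}$ with no constant — but the paper's own write-up has the same cosmetic sloppiness, and it is harmless since this term is later absorbed by Young's inequality with a $\beta$- and $|w|$-dependent constant in Corollary \ref{corollary2}.
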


\begin{proof}
Applying $\Lambda^{M-r-1}$ to $\eqref{MHD2}_{2}$, taking $L^2$ inner product to the resultant with $\nabla\Lambda^{M-r-1}a$,
integrating by parts, and using $\eqref{MHD2}_{1}$ yield
\begin{equation}\begin{split}\label{2-pri-7}
\beta&\|\nabla\Lambda^{M-r-1}a\|_0^2
=\int \Lambda^{M-r-1}\textrm{div}u_t\Lambda^{M-r-1}adx\\
&-\int \nabla\Lambda^{M-r-1}a\cdot\nabla\Lambda^{M-r-1}(h\cdot w) dx-\int \textrm{div}\Lambda^{M-r-1}R_2\Lambda^{M-r-1}a dx.
\end{split}\end{equation}
Thanks to $\eqref{MHD2}_1$, the first term on the right-hand side of \eqref{2-pri-7} can be rewritten as
\begin{equation*}\begin{split}
&\int\Lambda^{M-r-1}\textrm{div}u_t\Lambda^{M-r-1}adx\\
=&\frac{d}{dt}\int\Lambda^{M-r-1}\textrm{div}u\Lambda^{M-r-1}adx-\int\Lambda^{M-r-1}\textrm{div}u\Lambda^{M-r-1}a_tdx\\
=&\frac{d}{dt}\int\Lambda^{M-r-1}\textrm{div}u\Lambda^{M-r-1}adx+\|\textrm{div}\Lambda^{M-r-1}u\|_0^2\\
&-\int\Lambda^{M-r-1}\textrm{div}u\Lambda^{M-r-1}R_1dx.
\end{split}\end{equation*}
Thus, inserting the above equality into \eqref{2-pri-7} leads to
\begin{equation}\begin{split}\label{2-pri-8}
&\|\textrm{div}\Lambda^{M-r-1}u\|_0^2+\frac{d}{dt}\int\Lambda^{M-r-1}\textrm{div}u\Lambda^{M-r-1}adx\\
=&\beta\|\nabla\Lambda^{M-r-1}a\|_0^2+\int \nabla\Lambda^{M-r-1}a\cdot\nabla\Lambda^{M-r-1}(h\cdot w) dx\\
&+\int \textrm{div}\Lambda^{M-r-1}R_2\Lambda^{M-r-1}a dx+\int\Lambda^{M-r-1}\textrm{div}u\Lambda^{M-r-1}R_1dx.
\end{split}\end{equation}
By Lemma \ref{lemma-2025-7-22} and since $M\leq N-r-2$, one deduces
$$
\left|\int \nabla\Lambda^{M-r-1}a\cdot\nabla\Lambda^{M-r-1}(h\cdot w) dx\right|\leq\|h\|_{M-r}\|a\|_{M-r}
$$
and
\begin{align*}
&\left|\int\textrm{div}\Lambda^{M-r-1}R_2\Lambda^{M-r-1}adx\right|\\
\lesssim&\|R_2\|_{M-r}\|\Lambda^{M-r-1}a\|_0\lesssim(1+\|a,u,h\|_3^2)\|a,u,h\|_{3}\|a,u,h\|^2_{N},\\
&\left|\int\Lambda^{M-r-1}\textrm{div}u\Lambda^{M-r-1}R_1dx\right|\\
\lesssim&\|R_1\|_{M-r-1}\|\Lambda^{M-r}u\|_0\lesssim\|a,u,h\|_{3}\|a,u,h\|^2_{N}.
\end{align*}
Inserting the above estimates into \eqref{2-pri-8} yields the desire  conclusion.
\end{proof}

The follow lemma is derived by using $\eqref{MHD2}_{2,3,4}$.

\begin{lemma}\label{lemma-pri3}
Suppose \eqref{pri-2} holds. Then, for any $t\in[0,T]$, it follows that
\begin{equation*}\begin{split}
&\frac12\|w\cdot\nabla\Lambda^Lu\|_0^2+\frac{d}{dt}\int(w\cdot\nabla)\Lambda^Lh\cdot \Lambda^Ludx\\
\leq& C\|\mathrm{div}\Lambda^Lu\|^2_0+C\|h\|^2_{N+1}+C(1+\|a,u,h\|_3^2)\|a,u,h\|_3\|a,u,h\|^2_{N},
\end{split}\end{equation*}
where $C$ is a positive constant depending only on $N,|w|,\beta,$ and $Q$.
\end{lemma}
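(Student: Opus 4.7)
The plan is to differentiate the cross-correlation $\int(w\cdot\nabla)\Lambda^L h\cdot\Lambda^L u\,dx$ in time and use the evolution equations $\eqref{MHD2}_{2,3}$ to extract a dissipation of $\|w\cdot\nabla\Lambda^L u\|_0^2$. The linearized preview $\eqref{explain4'}$ already shows the mechanism: the product $(w\cdot\nabla)\Lambda^L h_t\cdot\Lambda^L u$, upon substituting $h_t=\Delta h+(w\cdot\nabla)u-w\,\textrm{div}u+R_3$, contains the key quadratic piece $\int(w\cdot\nabla)^2\Lambda^L u\cdot\Lambda^L u\,dx$, which after one integration by parts yields precisely $-\|w\cdot\nabla\Lambda^L u\|_0^2$. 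This is the coercive quantity that I would move to the left-hand side of the identity.

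I would then dispose of the remaining integrals one by one. The $\Delta h$ contribution becomes, after an integration by parts in $(w\cdot\nabla)$, bounded by $\epsilon\|w\cdot\nabla\Lambda^L u\|_0^2+C\|h\|_{N+1}^2$ via Young's inequality (using $L+2\leq N+1$). The $-w\,\textrm{div}u$ piece integrates by parts to $\int\Lambda^L\textrm{div}u\cdot(w\cdot\nabla)(w\cdot\Lambda^L u)\,dx$, which separates into $\epsilon\|w\cdot\nabla\Lambda^L u\|_0^2+C\|\textrm{div}\Lambda^L u\|_0^2$ by Cauchy--Schwarz, using that $(w\cdot\nabla)(w\cdot\Lambda^L u)=w\cdot(w\cdot\nabla)\Lambda^L u$ since $w$ is constant. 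From the $u_t$ side, the $(w\cdot\nabla)h$ term contributes exactly $\|w\cdot\nabla\Lambda^L h\|_0^2\leq|w|^2\|h\|_{N+1}^2$, which fits the stated bound. Crucially, the $-\beta\nabla a$ and $-\nabla(w\cdot h)$ pieces both vanish: integrating $\nabla$ by parts converts each into an integral involving $\textrm{div}[(w\cdot\nabla)\Lambda^L h]=(w\cdot\nabla)\textrm{div}\Lambda^L h=0$ by $\eqref{MHD2}_4$.

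The nonlinear remainders are handled by Lemma \ref{lemma-2025-7-22}: at orders $l=L$ (for $R_3$) and $m=L$ (for $R_2$), which are admissible since $L\geq r+3\geq 5\geq 2$ and $L+1\leq M-r\leq N$, the lemma gives $\|R_3\|_L\lesssim\|u,h\|_3\|u,h\|_N$ and $\|R_2\|_L\lesssim(1+\|a,u,h\|_3^2)\|a,u,h\|_3\|a,u,h\|_N$. Pairing these with $\|(w\cdot\nabla)\Lambda^L h\|_0\leq|w|\|h\|_{N+1}$ and $\|\Lambda^L u\|_0\leq\|u\|_N$ via Cauchy--Schwarz and Young's inequality produces nonlinear contributions of the form $(1+\|a,u,h\|_3^2)\|a,u,h\|_3\|a,u,h\|_N^2$, plus harmless $\epsilon$ contributions to the coercive term. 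After summing and choosing $\epsilon$ small enough to absorb every $\epsilon\|w\cdot\nabla\Lambda^L u\|_0^2$ into the coercive term, exactly $\tfrac12\|w\cdot\nabla\Lambda^L u\|_0^2$ remains on the left.

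I expect the main obstacle to be bookkeeping rather than any genuinely new idea: one must verify that every Sobolev index appearing ($L+2$ in the $\Delta h$ term, $L+1$ in the magnetic and nonlinear contributions) fits under $N+1$, and that the constants involving $|w|$, $\beta$, and $Q$ propagate cleanly through the Young splittings without altering the form of the right-hand side. The paper's hierarchy $r+3\leq L\leq M-r-1$, $M\leq N-r-2$ is calibrated precisely so that these index checks succeed, so I do not anticipate needing any structural idea beyond this scheme, and the vanishing of the two $\textrm{div}\,h=0$ terms is what makes the whole computation as clean as the linearized case.
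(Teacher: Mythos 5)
Your proposal is correct and follows essentially the same route as the paper: both produce the identity
$\|w\cdot\nabla\Lambda^L u\|_0^2 + \frac{d}{dt}\int(w\cdot\nabla)\Lambda^L h\cdot\Lambda^L u\,dx = \sum_j \mathrm{III}_j$
(the paper starts from the $L^2$ square and substitutes $(w\cdot\nabla)u$ from $\eqref{MHD2}_3$, you start from the time derivative of the cross term, which is the same computation rearranged), with the same use of $\operatorname{div}h=0$ to kill the $\nabla a$ and $\nabla(w\cdot h)$ contributions and the same estimates on the remaining pieces. The only cosmetic slip is pairing $\|R_3\|_L$ with $\|\Lambda^L u\|_0$ rather than with $\|(w\cdot\nabla)\Lambda^L u\|_0\lesssim\|u\|_{L+1}$ (as the paper does for $\mathrm{III}_5$), but this is immaterial since $L+1\le N$.
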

\begin{proof}
Applying $\Lambda^L$ with to $\eqref{MHD2}_{3}$ and taking $L^2$ inner product with $w\cdot\nabla\Lambda^Lu$ yield
\begin{equation}\begin{split}\label{pri-16}
\|w\cdot\nabla\Lambda^Lu\|_0^2=\int(w\cdot\nabla)\Lambda^Lu\cdot\Lambda^L(h_t-\Delta h+\textrm{div}uw-R_3)dx.
\end{split}\end{equation}
For the first term on the right-hand side of \eqref{pri-16}, one gets by $\eqref{MHD2}_2$ that
\begin{equation}\begin{split}\label{pri-17}
&\int(w\cdot\nabla)\Lambda^Lu\cdot\Lambda^Lh_tdx=-\int(w\cdot\nabla)\Lambda^Lh_t\cdot \Lambda^Ludx\\
&\ =-\frac{d}{dt}\int(w\cdot\nabla)\Lambda^Lh\cdot \Lambda^Ludx+\int(w\cdot\nabla)\Lambda^Lh\cdot\Lambda^Lu_t dx\\
&\ =-\frac{d}{dt}\int(w\cdot\nabla)\Lambda^Lh\cdot \Lambda^Ludx+\int(w\cdot\nabla)\Lambda^Lh\cdot\Lambda^L(w\cdot\nabla h+R_2)dx\\
&\ =-\frac{d}{dt}\int(w\cdot\nabla)\Lambda^Lh\cdot\Lambda^Lu dx+\textrm{III}_1+\textrm{III}_2,
\end{split}\end{equation}
where
$$
\textrm{III}_1:=\int(w\cdot\nabla)\Lambda^Lh\cdot\Lambda^L(w\cdot\nabla h) dx,
\quad \textrm{III}_2:=\int(w\cdot\nabla)\Lambda^Lh\cdot\Lambda^L R_2 dx,
$$
and the following has been used
$$
-\int(w\cdot\nabla)\Lambda^Lh\cdot\nabla\Lambda^L(\beta a+ h\cdot w)dx=\int(w\cdot\nabla)\Lambda^L\text{div}h\Lambda^L(\beta a+ h\cdot w)dx=0,
$$
as $\text{div}h=0$.
Substituting \eqref{pri-17} into \eqref{pri-16} yields
\begin{equation}
\begin{split}\label{pri-19}
\|&w\cdot\nabla\Lambda^Lu\|_0^2+\frac{d}{dt}\int\Lambda^Lu\cdot(w\cdot\nabla)\Lambda^Lhdx=\sum_{j=1}^5\textrm{III}_j,
\end{split}
\end{equation}
where $\textrm{III}_1$ and $\textrm{III}_2$ are as above, and
\begin{equation*}\begin{split}
&\textrm{III}_3:=-\int(w\cdot\nabla)\Lambda^Lu\cdot\Lambda^L\Delta hdx,\quad \textrm{III}_4:=\int(w\cdot\nabla)\Lambda^L(u\cdot w)\textrm{div}\Lambda^Ludx,\\
&\textrm{III}_5:=-\int (w\cdot\nabla)\Lambda^Lu\cdot\Lambda^LR_3dx.
\end{split}\end{equation*}
For $\textrm{III}_1, \textrm{III}_3,$ and $\textrm{III}_4$, noticing that $L\leq N-1$, one deduces
\begin{align*}
  |\textrm{III}_1|=&\|(w\cdot\nabla)\Lambda^Lh\|_0^2\lesssim\|h\|_{L+1}^2\lesssim\|h\|^2_{N+1},\\
  |\textrm{III}_3|=&\left|\int(w\cdot\nabla)\Lambda^Lu\cdot\Lambda^L\Delta hdx\right|\lesssim\|(w\cdot\nabla)\Lambda^Lu\|_0\|h\|_{L+2}\\
  \leq&C_\epsilon\|h\|^2_{N+1}+\epsilon\|(w\cdot\nabla)\Lambda^Lu\|_0^2,
\end{align*}
and
\begin{align*}
  |\textrm{III}_4|=&\left|\int(w\cdot\nabla)\Lambda^L(u\cdot w)\textrm{div}\Lambda^Ludx\right|
  \leq\|w\cdot\nabla\Lambda^L(u\cdot w)\|_0\|\textrm{div}\Lambda^Lu\|_0\\
\leq& C_{\epsilon}\|\textrm{div}\Lambda^Lu\|^2_0+\epsilon\|(w\cdot\nabla)\Lambda^Lu\|^2_0,
\end{align*}
for any $\epsilon>0$. For $\textrm{III}_2$ and $\textrm{III}_5$, it follows lemma \ref{lemma-2025-7-22} and $L\leq M-r-1$ that
\begin{equation*}
\begin{split}
|\textrm{III}_2|=&\left|\int(w\cdot\nabla)\Lambda^Lh\cdot\Lambda^L R_2 dx\right|\lesssim \|h\|_{L+1}\|\Lambda^LR_2\|_0\\
\lesssim&(1+\|a,u,h\|_3^2)\|a,u,h\|_3\|a,u,h\|^2_{N}
\end{split}
\end{equation*}
and
\begin{equation*}\begin{split}
|\textrm{III}_5|=&\left|\int (w\cdot\nabla)\Lambda^Lu\cdot\Lambda^LR_3dx\right|
\lesssim\|u\|_{L+1}\|\Lambda^LR_3\|_{0}\\
\lesssim&\|u\|_{L+1}\|u,h\|_3\|u,h\|_{N}\lesssim\|a,u,h\|_3\|a,u,h\|^2_{N}.
\end{split}\end{equation*}
Thus, by inserting the estimates of $\textrm{III}_1-\textrm{III}_5$ into \eqref{pri-19} and taking $\epsilon$ small enough,
the conclusion follows.
\end{proof}

For brevity, the following notations will be adopted
$$
\textrm{div}_w F:=(w\times\nabla)\cdot F,\quad \Delta_w f:=(w\times\nabla)\cdot(w\times\nabla)f.
$$
By taking the cross product of $w$ with equation $\eqref{MHD2}_2$ and the inner product of $w$ with equation $\eqref{MHD2}_3$ one has
\begin{equation}\label{MHD3}
\left\{
\begin{array}{rl}
&\hspace{-0,5cm}\partial_ta+\textrm{div}u=R_1,
\smallskip\\
&\hspace{-0,5cm}\partial_t(u\times w)
-\beta(w\times\nabla)a=(w\cdot\nabla)(h\times w)+(w\times\nabla)(w\cdot h)+R_2\times w,
\medskip\\
&\hspace{-0,5cm}\partial_t(h\cdot w)-\Delta(h\cdot w)
=\textrm{div}_w(u\times w)+R_3\cdot w,
\medskip\\
&\hspace{-0,5cm}{\mathop{\rm div}}\, h=0.
\end{array}
\right.
\end{equation}
Note that we have used the following identity in deriving the third equation
$$
\textrm{div}_w(u\times w)=(w\cdot\nabla)(u\cdot w)-|w|^2\textrm{div}u.
$$

The following lemma exploits a new dissipating mechanism for the perturbed density $a$ which is generated from the interaction
between the velocity field and the constant background magnetic field $w$.

\begin{lemma}\label{lemma-pri2}Suppose \eqref{pri-2} holds. Then, it holds for any $t\in[0,T]$ that
\begin{equation*}\begin{split}
\beta\|(w\times\nabla)\Lambda^Ma\|_0^2+&\frac{d}{dt}\int\Big(\mathrm{div}_w\Lambda^M(u\times w)\Lambda^Ma+\Lambda^M(h\cdot w)\mathrm{div}\Lambda^Mu\Big)dx\\
\leq& C(1+\|a,u,h\|_{3}^2)\|a,u,h\|_{3}\|a,u,h\|^2_{N}+ C_\epsilon\|h\|^2_{N+1}\\
&+\epsilon(\|a\|^2_{M-r}+\|\mathrm{div}\Lambda^{M-r-1}u\|^2_0),
\end{split}\end{equation*}
for any $\epsilon>0$, where $C$ and $C_\epsilon$ are positive constants depending only on $N, |w|,\beta,$ and $Q$ (with $C_\epsilon$ depending also on $\epsilon$).
\end{lemma}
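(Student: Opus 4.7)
My plan follows the linearized motivation sketched around \eqref{explain5}: combine the rotated momentum equation $\eqref{MHD3}_2$ and the projected induction equation $\eqref{MHD3}_3$ to extract the hidden dissipation of $w\times\nabla a$ generated by the background magnetic field.

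\textbf{Step 1.} I would apply $\Lambda^M$ to $\eqref{MHD3}_2$ and take the $L^2$ inner product with $(w\times\nabla)\Lambda^M a$. The $-\beta(w\times\nabla)a$ contribution places $\beta\|(w\times\nabla)\Lambda^M a\|_0^2$ on the left-hand side. For the $\partial_t(u\times w)$ term I would integrate by parts via the adjoint identity $\int F\cdot(w\times\nabla)g\,dx=-\int \mathrm{div}_w F\,g\,dx$, then pull the time derivative out to obtain $-\frac{d}{dt}\int \mathrm{div}_w\Lambda^M(u\times w)\Lambda^M a\,dx$ plus a remainder containing $\Lambda^M a_t$. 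The mass equation $\eqref{MHD2}_1$ converts $\Lambda^M a_t$ into $-\Lambda^M\mathrm{div}u+\Lambda^M R_1$, leaving the crucial bilinear piece $-\int \mathrm{div}_w\Lambda^M(u\times w)\Lambda^M\mathrm{div}u\,dx$.

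\textbf{Step 2.} I would then substitute $\mathrm{div}_w(u\times w)=\partial_t(h\cdot w)-\Delta(h\cdot w)-R_3\cdot w$ from $\eqref{MHD3}_3$ into that bilinear piece. The $\partial_t(h\cdot w)$ contribution, after pulling out the time derivative, produces exactly the second $\frac{d}{dt}$ integral in the statement, at the cost of a leftover $\int\Lambda^M(h\cdot w)\Lambda^M\mathrm{div}u_t\,dx$. Invoking $\eqref{MHD2}_2$ together with $\mathrm{div}h=0$ gives $\mathrm{div}u_t=-\beta\Delta a-\Delta(w\cdot h)+\mathrm{div}R_2$, so the apparently dangerous term is $\beta\int\Lambda^{M+1}(h\cdot w)\Lambda^{M+1}a\,dx$, which at first sight spoils the very density dissipation one is trying to isolate.

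\textbf{Step 3.} The key trick is to exploit the self-adjointness of $\Lambda$ and redistribute $r+1$ derivatives off $a$ onto $h\cdot w$:
$$
\beta\int\Lambda^{M+1}(h\cdot w)\Lambda^{M+1}a\,dx=\beta\int\Lambda^{M+r+2}(h\cdot w)\Lambda^{M-r}a\,dx\lesssim \|h\|_{N+1}\|a\|_{M-r},
$$
which is legal because $M+r+2\le N+1$ under the hypothesis $M\le N-r-2$, and legitimate even for negative shifts of $\Lambda$ since $a$ is mean-zero; Young's inequality then delivers exactly $C_\epsilon\|h\|_{N+1}^2+\epsilon\|a\|_{M-r}^2$. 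The same shifting trick turns $-\int\Lambda^M\Delta(h\cdot w)\Lambda^M\mathrm{div}u\,dx$ into a product bounded by $\|h\|_{N+1}\|\mathrm{div}\Lambda^{M-r-1}u\|_0$, and likewise handles the source-type contributions from $(w\cdot\nabla)(h\times w)$ and $(w\times\nabla)(w\cdot h)$ in $\eqref{MHD3}_2$ after pairing them with $(w\times\nabla)\Lambda^M a$. All nonlinear remainders generated by $R_1,R_2,R_3$ are controlled by Lemma \ref{lemma-2025-7-22} and collapse into the nonlinear class $(1+\|a,u,h\|_3^2)\|a,u,h\|_3\|a,u,h\|_N^2$.

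\textbf{Main obstacle.} The hardest aspect is the bookkeeping of derivative shifts: each integration by parts must be arranged so that derivatives eventually land on $h$ (controlled by the resistive dissipation through $\|h\|_{N+1}^2$) while the companion factor on $a$ or $\mathrm{div}u$ is lowered to $\|a\|_{M-r}$ or $\|\mathrm{div}\Lambda^{M-r-1}u\|_0$, respectively. A naive bound would produce a forbidden $\|a\|_{M+1}^2$ with no chance of absorption; making this rescue succeed uses the full strength of the index constraints $r+3\le L\le M-r-1$ and $M\le N-r-2$, together with Corollary \ref{CorPoincare} to interpret the surviving $\epsilon\|a\|_{M-r}^2$ as a small fraction of the left-hand side.
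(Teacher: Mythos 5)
Your proposal reproduces the paper's argument almost step for step: apply $\Lambda^M$ to $\eqref{MHD3}_2$, pair with $(w\times\nabla)\Lambda^Ma$, integrate by parts to form the two $\frac{d}{dt}$ corrector integrals via $\eqref{MHD3}_{1,3}$ and $\eqref{MHD2}_2$, and absorb the resulting high-order bilinear terms by redistributing $r+1$ derivatives onto $h$ so that only $\|h\|_{N+1}$, $\|a\|_{M-r}$, and $\|\mathrm{div}\Lambda^{M-r-1}u\|_0$ survive, with all remainders controlled by Lemma \ref{lemma-2025-7-22}. This is essentially identical to the paper's proof (the Corollary \ref{CorPoincare} absorption of $\epsilon\|a\|_{M-r}^2$ you mention at the end is actually deferred to Corollary \ref{corollary2}, not performed inside this lemma, but that is a bookkeeping matter only).
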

\begin{proof}
Applying $\Lambda^M$ with to $\eqref{MHD3}_{2}$, taking $L^2$ inner product to the resultant with $(w\times\nabla)\Lambda^Ma$, and integrating by parts yield
\begin{equation}\begin{split}\label{pri-7}
\beta\|(w\times\nabla)\Lambda^Ma\|_0^2&=\int\Big(-\textrm{div}_w\Lambda^M(u_t\times w)+(w\cdot\nabla)^2(\Lambda^Mh\cdot w)\\
&+\Delta_w\Lambda^M(h\cdot w)+\textrm{div}_w\Lambda^M(R_2\times w)\Big)\Lambda^Madx,
\end{split}\end{equation}
where we have used
$$
w\cdot\nabla\textrm{div}_w(h\times w)=w\cdot\nabla(w\cdot\nabla(h\cdot w)-|w|^2\textrm{div}h)=(w\cdot\nabla)^2(h\cdot w),
$$
as $\textrm{div}h=0$.
Using $\eqref{MHD3}_{1,3}$, one has
\begin{align}\label{811}
&-\int\textrm{div}_w\Lambda^M(u_t\times w)\Lambda^Madx\nonumber\\
=&-\frac{d}{dt}\int\textrm{div}_w\Lambda^M(u\times w)\Lambda^Madx+\int\textrm{div}_w\Lambda^M(u\times w)\Lambda^Ma_tdx\nonumber\\
=&-\frac{d}{dt}\int\textrm{div}_w\Lambda^M(u\times w)\Lambda^Madx-\int\textrm{div}_w\Lambda^M(u\times w)\textrm{div}\Lambda^Mudx+\textrm{II}_1\nonumber\\
=&-\frac{d}{dt}\int\textrm{div}_w\Lambda^M(u\times w)\Lambda^Madx-\int\Lambda^M\partial_t(h\cdot w)\textrm{div}\Lambda^Mudx+\textrm{II}_1+\textrm{II}_2+\textrm{II}_3\nonumber\\
=&-\frac{d}{dt}\int\big(\textrm{div}_w\Lambda^M(u\times w)\Lambda^Ma+\Lambda^M(h\cdot w)\textrm{div}\Lambda^Mu\big)dx+\sum_{j=1}^4\textrm{II}_j,
\end{align}
where
\begin{equation*}\begin{split}
&\textrm{II}_1:=\int\textrm{div}_w\Lambda^M(u\times w)\Lambda^MR_1dx,\quad \textrm{II}_2:=-\int\Lambda^{M+2}(h\cdot w)\textrm{div}\Lambda^Mudx,\\ &\textrm{II}_3:=\int\Lambda^{M}(R_3\cdot w)\textrm{div}\Lambda^Mudx,\quad \textrm{II}_4:=\int\Lambda^M(h\cdot w)\textrm{div}\Lambda^Mu_tdx.
\end{split}\end{equation*}
Thus, inserting (\ref{811}) into \eqref{pri-7} leads to
\begin{equation}\begin{split}\label{pri-8}
\beta\|(w\times\nabla)\Lambda^Ma\|_0^2&+\frac{d}{dt}\int\Big(\textrm{div}_w\Lambda^M(u\times w)\Lambda^Ma+\Lambda^M(h\cdot w)\textrm{div}\Lambda^Mu\Big)dx\\
=\sum_{j=1}^7\textrm{II}_j,
\end{split}\end{equation}
where $\textrm{II}_1, \textrm{II}_2, \textrm{II}_3, \textrm{II}_4$ are as above and
\begin{equation*}\begin{split}
&\textrm{II}_5:=\int(w\cdot\nabla)^2(\Lambda^Mh\cdot w)\Lambda^Madx,\quad\textrm{II}_6:=\int\Delta_w\Lambda^M(h\cdot w)\Lambda^Madx,\\ &\textrm{II}_7:=\int\textrm{div}_w\Lambda^M(R_2\times w)\Lambda^Madx.
\end{split}\end{equation*}
For $\textrm{II}_1, \textrm{II}_3,$ and $\textrm{II}_7$, one deduces by Lemma \ref{lemma-2025-7-22} and $3\leq M+1<N$ that
\begin{equation*}\begin{split}
|\textrm{II}_1|&\lesssim\|u\|_{M+1}\|R_1\|_{M}\lesssim\|u\|_{N}\|a,u\|_{2}\|a,u\|_{N},\\
|\textrm{II}_3|&\lesssim\|u\|_{M+1}\|R_3\|_M\lesssim\|u\|_{N}\|u,h\|_3\|u,h\|_{N},\\
\end{split}\end{equation*}
and
\begin{equation*}\begin{split}
|\textrm{II}_7|&\lesssim\|R_2\|_{M+1}\|a\|_{M}\lesssim(1+\|a,u,h\|_{3}^2)\|a,u,h\|_{3}\|a,u,h\|^2_{N}.
\end{split}\end{equation*}
For $\textrm{II}_2$, it follows from integrating by parts and $M+r+2\leq N$ that
\begin{equation*}\begin{split}
|\textrm{II}_2|&=\Big|\int\Lambda^{M+r+3}(h\cdot w)\textrm{div}\Lambda^{M-r-1}udx\Big|\lesssim\|h\|_{N+1}\|\textrm{div}\Lambda^{M-r-1}u\|_0\\
&\leq C_\epsilon\|h\|^2_{N+1}+\epsilon\|\textrm{div}\Lambda^{M-r-1}u\|^2_0.\\
\end{split}\end{equation*}
For $\textrm{II}_4$, it follows from $\eqref{MHD2}_{2}$ that
$$
\textrm{II}_4=\int\Lambda^M(h\cdot w)\textrm{div}\Lambda^M(-\beta\nabla a-\nabla(w\cdot h)+R_2)dx=:\sum_{j=1}^3\textrm{II}_{4,j},
$$
where recalling $M+2+r\leq N+1$ and by Lemma \ref{lemma-2025-7-22}, one has
\begin{align*}
|\textrm{II}_{4,1}|=&\beta\left|\int\Lambda^{M}(h\cdot w)\Lambda^{M+2}adx\right|=\beta\left|\int\Lambda^{M+2+r}(h\cdot w)\Lambda^{M-r}adx\right|\\
\lesssim&\|h\|_{M+2+r}\|a\|_{M-r}\leq C_\epsilon\|h\|^2_{N+1}+\epsilon\|a\|^2_{M-r},\quad\forall\epsilon>0,\\
|\textrm{II}_{4,2}|=&\left|\int\Lambda^M(h\cdot w)\Delta\Lambda^M(h\cdot w)dx\right|
\lesssim\|h\|_{M}\|h\|_{M+2}\lesssim\|h\|^2_{N+1},\\
|\textrm{II}_{4,3}|=&\left|\int\Lambda^M(h\cdot w)\text{div}\Lambda^MR_2dx\right|\lesssim\|h\|_{M}\|R_2\|_{M+1}\\
\lesssim&(1+\|a,u,h\|_{3}^2)\|a,u,h\|_{3}\|a,u,h\|^2_{N},
\end{align*}
and thus
$$
|\textrm{II}_4|\lesssim\epsilon\|a\|_{M-r}^2+C_\epsilon\|h\|_{N+1}^2+(1+\|a,u,h\|_{3}^2)\|a,u,h\|_{3}\|a,u,h\|^2_{N},
$$
for any $\epsilon>0$. For $\textrm{II}_5$ and $\textrm{II}_6$, one deduces by using integrating by parts and $ M+r+2\leq N$ that
\begin{align*}
|\textrm{II}_5|=&\Big|\int(w\cdot\nabla)^2(\Lambda^{M+r}h\cdot w)\Lambda^{M-r}adx\Big|\lesssim\|h\|_{N+1}\|a\|_{M-r}\\
\leq& C_\epsilon\|h\|^2_{N+1}+\epsilon\|a\|^2_{M-r}
\end{align*}
and similarly
$$
|\textrm{II}_6|\lesssim\|h\|_{N+1}\|a\|_{M-r}\leq C_\epsilon\|h\|^2_{N+1}+\epsilon\|a\|^2_{M-r},
$$
for any $\epsilon>0$.
Inserting estimates for $\textrm{II}_1$--$\textrm{II}_7$ into \eqref{pri-8} leads to the conclusion.
\end{proof}



\begin{corollary}\label{corollary2} Suppose \eqref{pri-2} holds and $w$ satisfies the Diophantine condition \eqref{Diophantine0}
with constants $c$ and $r$.
Then, it holds for any $t\in[0,T]$ that
\begin{equation*}\begin{split}
& (2C_1K_1)^{-1}\|u\|_{L-r}^2+C_1^*\beta\|a\|_{M-r}^2+\frac{d}{dt}\int\bigg[2C_{1}^*\Lambda^{M-r-1}\mathrm{div}u\Lambda^{M-r-1}a
\\
&+(w\cdot\nabla)\Lambda^Lh\cdot\Lambda^Lu+\frac{6C_1^*}{K_3}\bigg(\emph{div}_w\Lambda^M(u\times w)\Lambda^Ma+\Lambda^M(h\cdot w)\emph{div}\Lambda^Mu\bigg)\bigg]dx\\
\leq& C\|h\|^2_{N+1}+C(1+\|a,u,h\|_3^2)\|a,u,h\|_3\|a,u,h\|^2_{N},
\end{split}\end{equation*}
where $K_1$ and $K_3$ are as in Lemma \ref{Diophantine-inequality} and Corollary \ref{CorPoincare}, respectively,
$C_1$ is a constant depending only on $N$, and constants $C_1^*$ and $C$ depend only on $N, |w|, \beta, K_3,$ and $Q$.
\end{corollary}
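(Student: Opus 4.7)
The plan is to combine the three preceding dissipation estimates---Lemma \ref{lemma-2025-6-27}, Lemma \ref{lemma-pri3}, and Lemma \ref{lemma-pri2}---in the weighted sum
\[
 2C_1^*\cdot(\text{Lemma \ref{lemma-2025-6-27}}) \;+\; (\text{Lemma \ref{lemma-pri3}}) \;+\; \tfrac{6C_1^*}{K_3}\cdot(\text{Lemma \ref{lemma-pri2}}),
\]
with the constant $C_1^*$ to be chosen large (depending on $K_3$, $\beta$, and the fixed constants of those three lemmas) and the small parameter $\epsilon$ of Lemma \ref{lemma-pri2} chosen afterwards. The three partial dissipative quantities appearing on the left of this sum are $\|\mathrm{div}\Lambda^{M-r-1}u\|_0^2$, $\|w\cdot\nabla\Lambda^Lu\|_0^2$, and $\beta\|(w\times\nabla)\Lambda^Ma\|_0^2$; the accompanying time-derivative is exactly the one displayed in the statement.

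The first step is to upgrade the latter two partial dissipations to genuine Sobolev norms via the Diophantine-based Poincar\'e inequalities. Corollary \ref{CorPoincare} applied to $a$ (whose spatial mean vanishes by \eqref{junzhi1}) gives $\|(w\times\nabla)\Lambda^Ma\|_0^2\geq K_3^2\|a\|_{M-r}^2$, while Lemma \ref{Diophantine-inequality} with $s=L$ combined with the norm equivalence \eqref{EQUIVN} (valid under \eqref{pri-2}) gives $\tfrac12\|w\cdot\nabla\Lambda^Lu\|_0^2\geq (2C_1K_1)^{-1}\|u\|_{L-r}^2$ for a suitable $C_1$ depending only on $N$. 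These furnish the two dissipative terms displayed on the left-hand side of the corollary.

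The second step is to absorb the bad right-hand contributions. First, because $L\leq M-r-1$ (by the assumptions of Theorem \ref{th:main1}) and $|2\pi k|\geq 2\pi>1$ for all $k\in\mathbb Z^3\setminus\{0\}$, one has $\|\mathrm{div}\Lambda^Lu\|_0^2\leq\|\mathrm{div}\Lambda^{M-r-1}u\|_0^2$, so the bad $\|\mathrm{div}\Lambda^Lu\|_0^2$ coming from Lemma \ref{lemma-pri3} is absorbed by the good $2C_1^*\|\mathrm{div}\Lambda^{M-r-1}u\|_0^2$ once $C_1^*$ is taken larger than the constant in Lemma \ref{lemma-pri3}. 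Next, the bad $\beta\|a\|_{M-r}^2$ from Lemma \ref{lemma-2025-6-27} is absorbed by the newly dissipative $\|a\|_{M-r}^2$ produced by the weighted Lemma \ref{lemma-pri2}, provided $C_1^*$ is chosen large relative to $K_3$ and $\beta$. The cross term $\|h\|_{M-r}\|a\|_{M-r}$ of Lemma \ref{lemma-2025-6-27} is split by Young's inequality into a small multiple of $\|a\|_{M-r}^2$ (absorbed as above) plus $C\|h\|_{N+1}^2$, using $M-r\leq N+1$ which follows from $M\leq N-r-2$. Finally, taking $\epsilon$ small enough in terms of $C_1^*$ and $K_3$ makes the $\tfrac{6C_1^*\epsilon}{K_3}(\|a\|_{M-r}^2+\|\mathrm{div}\Lambda^{M-r-1}u\|_0^2)$ contribution from Lemma \ref{lemma-pri2} harmless.

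The main obstacle is the bookkeeping of constants: $C_1^*$ and $\epsilon$ must be chosen in the correct order so that each of the four competing bad contributions (two on the divergence, one pure $\|a\|_{M-r}^2$, and one cross term) is killed simultaneously while strictly positive coefficients $(2C_1K_1)^{-1}$ and $C_1^*\beta$ survive on the dissipative side. Once this is arranged, the remaining right-hand side consists only of the $h$-pieces collected into $C\|h\|_{N+1}^2$ (to be closed later against the magnetic dissipation from Lemma \ref{lemma-pri1}) and the nonlinear term $C(1+\|a,u,h\|_3^2)\|a,u,h\|_3\|a,u,h\|_N^2$ inherited unchanged from the three lemmas, producing exactly the stated inequality.
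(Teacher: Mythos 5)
Your proposal is correct and takes essentially the same route as the paper: weight Lemma \ref{lemma-2025-6-27} by $2C_1^*$, Lemma \ref{lemma-pri2} by $6C_1^*/K_3$, add Lemma \ref{lemma-pri3}, apply Lemma \ref{Diophantine-inequality} together with \eqref{EQUIVN} to upgrade $\|w\cdot\nabla\Lambda^Lu\|_0$ to $\|u\|_{L-r}$, apply Corollary \ref{CorPoincare} to upgrade $\|(w\times\nabla)\Lambda^Ma\|_0$ to $\|a\|_{M-r}$, split the cross term $\|h\|_{M-r}\|a\|_{M-r}$ via Young (using $M-r\leq N+1$), bound $\|\mathrm{div}\Lambda^Lu\|_0$ by $\|\mathrm{div}\Lambda^{M-r-1}u\|_0$ via $L\leq M-r-1$, and finally choose $\epsilon$ small (the paper takes $\epsilon=\tfrac{K_3}{6}\min\{\beta,1\}$). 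The only cosmetic difference is that you frame $C_1^*$ as a free large parameter to be chosen, whereas the paper defines $C_1^*$ as the fixed constant from Lemma \ref{lemma-pri3} and the absorption then works automatically from the fixed ratio of weights; both framings close the argument.
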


\begin{proof}
Noticing that $M-r\leq N+1$, it follows from Lemma \ref{lemma-2025-6-27} and the Young inequality that
\begin{equation}\label{INEQ1}
\begin{split}
&\|\mathrm{div}\Lambda^{M-r-1}u\|_0^2+\frac{d}{dt}\int\Lambda^{M-r-1}\mathrm{div}u\Lambda^{M-r-1}adx \\
\leq&\beta\|a\|_{M-r}^2
+\|h\|_{M-r}\|a\|_{M-r}+C(1+\|a,u,h\|_3^2)\|a,u,h\|_{3}\|a,u,h\|^2_{N}\\
\leq&2\beta\|a\|_{M-r}^2+C_\beta\|h\|_{N+1}^2+C_\beta(1+\|a,u,h\|_N^2)\|a,u,h\|_{3}\|a,u,h\|^2_{N},
\end{split}
\end{equation}
for a positive constant $C_\beta$ depending only on $N, |w|, \beta,$ and $Q$. Using \eqref{EQUIVN} and \eqref{pri-2}, it follows from
Lemma \ref{Diophantine-inequality} that
\begin{align*}
  \|u\|_{L-r}^2\leq C_1\|\Lambda^{L-r}u\|_0^2\leq C_1 K_1 \|w\cdot\nabla\Lambda^Lu\|_0^2,
\end{align*}
where $C_1$ is a positive constant depending only on $N$ and $K_1$ is the constant as in Lemma \ref{Diophantine-inequality}.
Thanks to this, it follows from Lemma \ref{lemma-pri3} and $L\leq M-r-1$ that
\begin{equation}\label{INEQ2}
\begin{split}
&(2C_1K_1)^{-1}\|u\|_{L-r}^2+\frac{d}{dt}\int(w\cdot\nabla)\Lambda^Lh\cdot \Lambda^Ludx\\
\leq& C_1^*\|\mathrm{div}\Lambda^{M-r-1}u\|^2_0+C_1^*\|h\|^2_{N+1}+C_1^*(1+\|a,u,h\|_3^2)\|a,u,h\|_3\|a,u,h\|^2_{N},
\end{split}
\end{equation}
for a positive constant $C_1^*$ depending only on $N, |w|, \beta, Q$.
Recalling the definition of $a$ in (\ref{a}), it follows from \eqref{junzhi1} that
$\int adx=0$. Thanks to this, it follows from Corollary \ref{CorPoincare} and Lemma \ref{lemma-pri2} that
\begin{equation}\begin{split}\label{INEQ3}
\beta K_3\|a\|_{M-r}^2+\frac{d}{dt}&\int\Big(\mathrm{div}_w\Lambda^M(u\times w)\Lambda^Ma+\Lambda^M(h\cdot w)\mathrm{div}\Lambda^Mu\Big)dx\\
\leq&C(1+\|a,u,h\|_{3}^2)\|a,u,h\|_{3}\|a,u,h\|^2_{N}+ C_\epsilon\|h\|^2_{N+1}\\
&+\epsilon(\|a\|^2_{M-r}+\|\mathrm{div}\Lambda^{M-r-1}u\|^2_0),
\end{split}\end{equation}
for any $\epsilon>0$, where $K_3$ is the constant as in Corollary \ref{CorPoincare}, $C$ and $C_\epsilon$ are positive
constants depending only on $N, |w|, \beta,$ and $Q$ (with $C_\epsilon$ depending also on $\epsilon$).
Multiplying \eqref{INEQ1} and (\ref{INEQ3}), respectively, by $2C_1^*$ and $\frac{6C_1^*}{K_3}$, summing the resultants
up, together with \eqref{INEQ2}, and choosing $\epsilon=\frac{K_3}{6}\min\{\beta,1\}$ lead to the conclusion.
\end{proof}

\begin{lemma}\label{lemma-pri4}
Assume that $w$ satisfies the Diophantine condition \eqref{Diophantine0} with constants $c$ and $r$, and that
\begin{eqnarray}
&&\frac12<\rho<\frac32\mbox{ on }\mathbb T^3\times[0,T],\quad \sup_{0\leq t\leq T}\mathcal E_{N+d}(t)\leq \Gamma,  \label{M}\\
&&\sup_{0\leq t\leq T}\left[\big(1+\mathcal E_N(t)\big)\mathcal E_{L-r}^{\frac{d-(N+r-L)}{2(N+d+r-L)}}(t)\mathcal E_{N+d}^{\frac{N+r-L}{N+d+r-L}}(t)\right]\leq\varepsilon, \label{epsilon1}
\end{eqnarray}
for two positive constants $\Gamma$ and $\varepsilon$, where $\mathcal E_j(t)$ is as in (\ref{Ej}).

Then, there are positive constants $\varepsilon_1^*$ and $C_{*}\geq1$ depending only on $N$, $d$, $L$, $|w|$, $r$, $c$, $\beta,$ $\alpha_1$, $\alpha_2,$ and $Q$, such that
the following estimates hold
\begin{eqnarray*}
&&\mathcal E_N(t)\leq C_*\mathcal E_{N,0}\left[1+\left(\frac{\mathcal E_{N,0}}{\Gamma}\right)^{\frac{N+r-L}{d}} t\right]^{-\frac{d}{N+r-L}}, \\
&&\mathcal E_{N+d}(t)\leq C_*\mathcal E_{N+d,0} \exp\left\{C_{*} \Gamma^{\frac{N+r-L}{d}}\mathcal E_{N,0}^{\frac{d-2(N+r-L)}{2d}} (1+\mathcal E_{N,0})
  \right\},
\end{eqnarray*}
as long as $\varepsilon\leq\varepsilon_1^*$.
\end{lemma}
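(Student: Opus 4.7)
The plan is to build a modified order-$N$ energy $\mathfrak{E}_N(t)\sim\mathcal E_N(t)$ that satisfies a closed dissipation inequality of the form $\frac{d}{dt}\mathfrak{E}_N+\mathfrak{D}_N\leq C(1+\mathcal E_N)\mathcal E_3^{1/2}\mathcal E_N$, with hidden dissipation
\[
\mathfrak{D}_N:=\|u\|_{L-r}^2+\|a\|_{M-r}^2+\|\nabla h\|_N^2.
\]
The smallness assumption \eqref{epsilon1} is tailor-made to absorb the right-hand side into $\mathfrak{D}_N$; afterwards, Gagliardo--Nirenberg interpolation together with $\mathcal E_{N+d}\leq\Gamma$ converts the absolute dissipation into a Bernoulli-type feedback on $\mathcal E_N$, producing the polynomial decay. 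The top-order bound then follows from the basic energy estimate at level $N+d$ plus the time-integrability of $\mathcal E_N^{1/2}$, the latter being guaranteed precisely by $d>2(N+r-L)$.

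First I would sum Lemma \ref{lemma-pri1} over $s=0,1,\ldots,N$ (with the $s=0$ case supplied by \eqref{pri-1}) and add a sufficiently small multiple $\delta>0$ of Corollary \ref{corollary2}, defining
\[
\mathfrak{E}_N:=\sum_{s=0}^N\left\|\sqrt{\tfrac{p'}{\rho}}\,\Lambda^s a,\,\sqrt{\rho}\,\Lambda^s u,\,\Lambda^s h\right\|_0^2+\delta\,(\text{interaction bracket in Corollary \ref{corollary2}}).
\]
The bounds $\alpha_1\leq p'/\rho\leq\alpha_2$ together with \eqref{pri-2} and the smallness of $\delta$ give $\mathfrak{E}_N\sim\mathcal E_N$; the term $C\|h\|_{N+1}^2$ on the right of Corollary \ref{corollary2} is absorbed by the $\|\nabla h\|_N^2$ dissipation coming from the summed basic energy estimate (using Poincar\'e $\|h\|_0\lesssim\|\nabla h\|_0$). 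This yields
\[
\frac{d}{dt}\mathfrak{E}_N+c_0\mathfrak{D}_N\leq C(1+\mathcal E_N)\mathcal E_3^{1/2}\mathcal E_N
\]
for some $c_0>0$ depending only on the listed parameters.

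Next, since $L-r\geq 3$ we have $\mathcal E_3^{1/2}\leq\mathcal E_{L-r}^{1/2}$, and by Gagliardo--Nirenberg together with \eqref{M},
\[
\mathcal E_N\leq C\,\mathcal E_{L-r}^{\theta_1}\mathcal E_{N+d}^{1-\theta_1},\qquad \theta_1:=\frac{d}{N+d+r-L}.
\]
A direct computation gives $\tfrac12+\theta_1=1+\tfrac{d-(N+r-L)}{2(N+d+r-L)}$ and $1-\theta_1=\tfrac{N+r-L}{N+d+r-L}$, hence
\[
\mathcal E_3^{1/2}\mathcal E_N\leq C\,\mathcal E_{L-r}\cdot\mathcal E_{L-r}^{\frac{d-(N+r-L)}{2(N+d+r-L)}}\mathcal E_{N+d}^{\frac{N+r-L}{N+d+r-L}},
\]
where the product of the last two factors is exactly the quantity controlled by \eqref{epsilon1}. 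Since $L\leq M-r-1$ gives $\|a\|_{L-r}\leq\|a\|_{M-r}$, and Poincar\'e gives $\|h\|_{L-r}\leq C\|\nabla h\|_N$, we have $\mathcal E_{L-r}\leq C\mathfrak{D}_N$, and therefore $(1+\mathcal E_N)\mathcal E_3^{1/2}\mathcal E_N\leq C\varepsilon\,\mathfrak{D}_N$. Choosing $\varepsilon_1^*$ with $C\varepsilon_1^*\leq c_0/2$ closes the previous estimate to $\frac{d}{dt}\mathfrak{E}_N+\tfrac{c_0}{2}\mathfrak{D}_N\leq 0$.

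The same interpolation yields $\mathfrak{D}_N\geq c\,\mathcal E_N^{1/\theta_1}\Gamma^{-(1-\theta_1)/\theta_1}$ with $1/\theta_1=1+(N+r-L)/d$, producing the Bernoulli inequality
\[
\frac{d}{dt}\mathfrak{E}_N+c\,\frac{\mathfrak{E}_N^{\,1+(N+r-L)/d}}{\Gamma^{(N+r-L)/d}}\leq 0,
\]
whose explicit solution, together with $\mathfrak{E}_N\sim\mathcal E_N$, is exactly the claimed polynomial decay of $\mathcal E_N$. For the top level, summing Lemma \ref{lemma-pri1} over $s=0,\ldots,N+d$ (no dissipative feedback is available here) gives $\frac{d}{dt}\mathcal E_{N+d}\leq C(1+\mathcal E_N)\mathcal E_N^{1/2}\mathcal E_{N+d}$, and Gr\"onwall combined with
\[
\int_0^\infty\mathcal E_N^{1/2}(s)\,ds\leq C\,\mathcal E_{N,0}^{\frac{d-2(N+r-L)}{2d}}\Gamma^{\frac{N+r-L}{d}},
\]
finite precisely because $d>2(N+r-L)$, then yields the stated exponential-type bound on $\mathcal E_{N+d}(t)$ after using $\mathcal E_N\leq\mathcal E_{N,0}$ in the prefactor. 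The main technical obstacle is the bookkeeping in the absorption step: verifying that the peculiar exponents in \eqref{epsilon1} are exactly those produced by the interpolation, so that the cubic-looking nonlinearity becomes linear in $\mathfrak{D}_N$ and the smallness of $\varepsilon$ can propagate uniformly in time.
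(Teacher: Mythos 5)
Your proposal is correct and takes essentially the same route as the paper: modified energy equal to a weighted Sobolev energy plus a small multiple of the interaction bracket from Corollary \ref{corollary2}, absorption of the cubic nonlinearity into the hidden dissipation via the interpolation inequality and the smallness hypothesis \eqref{epsilon1}, a Bernoulli ODE yielding the polynomial decay of $\mathcal E_N$, and then Gr\"onwall at level $N+d$ using the time-integrability of $\mathcal E_N^{1/2}$ guaranteed by $d>2(N+r-L)$. (Two harmless imprecisions: the $s=0$ piece of the modified energy should be $\int e(a+1)\,dx$ rather than $\|\sqrt{p'/\rho}\,a\|_0^2$, consistent with \eqref{pri-1}; and the top-order differential inequality is for the weighted quantity $Y(t)\sim\mathcal E_{N+d}(t)$, not $\mathcal E_{N+d}$ itself, but these change nothing.)
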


\begin{proof}
Denote
$$
\widetilde{\mathcal E}_N(t):=\int e(a+1)dx+\sum_{s=1}^N\left\|\sqrt{\frac{p'}{\rho}}\Lambda^sa\right\|_0^2
+\sum_{s=0}^N\|\sqrt\rho\Lambda^su,\Lambda^sh\|^2_0.
$$
Then, by Lemma \ref{lemma-pri1} and \eqref{pri-1}, one obtains by the Poincar\'e and Young inequalities that
\begin{equation}\begin{split}\label{pri-20}
\frac{d}{dt}\widetilde{\mathcal E}_N(t)+2\kappa\|h\|_{N+1}^2
\lesssim&(1+\|a,u,h\|_3^2)\|a,u,h\|_3\|a,u,h\|^2_N\\
\lesssim&(1+\|a,u,h\|_N^2)\|a,u,h\|_{L-r}\|a,u,h\|^2_N,
\end{split}\end{equation}
for an absolute positive constant $\kappa$.
Note that $e(1)=e'(1)=0$ and $e''(\rho)=\frac{2p'(\rho)}{\rho}>0$ and $a\in(-\frac12,\frac12)$.
One obtains by the Taylor expansion that
 \begin{equation*}\begin{split}
\left(\inf_{s\in(\frac12,\frac32)}\frac{p'(s)}{s}\right)a^2\leq e(a+1)\leq \left(\sup_{s\in(\frac12,\frac32)}\frac{p'(s)}{s}\right)a^2.
\end{split}\end{equation*}
Since $\rho\in(\frac12,\frac32)$ and recalling the definitions of $\alpha_1$ and $\alpha_2$, it is clear that
\begin{equation}\begin{split}\label{2025-7-8-3}
\alpha_1\mathcal E_N(t)=\alpha_1\|a,u,h\|^2_N\leq \widetilde{\mathcal E}_N(t)
\leq\alpha_2\|a,u,h\|^2_N=\alpha_2\mathcal E_N(t).
\end{split}\end{equation}
Denote
\begin{equation*}\begin{split}
X(t):=&\widetilde{\mathcal E}_N(t)+\delta_*\int\bigg[2C_{1}^*\Lambda^{M-r-1}\mathrm{div}u\Lambda^{M-r-1}a
+(w\cdot\nabla)\Lambda^Lh\cdot\Lambda^Lu\\
&+\frac{6C_1^*}{K_3}\bigg({\rm{div}}_w\Lambda^M(u\times w)\Lambda^Ma+\Lambda^M(h\cdot w)\rm{div}\Lambda^Mu\bigg)\bigg]dx
\end{split}\end{equation*}
for a positive constant $\delta_*$ to be determined. Due to \eqref{2025-7-8-3}, one can choose $\delta_*$ sufficiently small such that
\begin{equation}
 \frac{\alpha_1}{2}\mathcal E_N(t)= \frac{\alpha_1}{2}\|a,u,h\|_N^2\leq X(t)\leq2\alpha_2\|a,u,h\|_N^2
 = 2\alpha_2\mathcal E_N(t). \label{pri-21}
\end{equation}
Then, choosing $\delta_*$ sufficiently small,
it follows from \eqref{pri-20} and Corollary \ref{corollary2} that
\begin{equation}\begin{split}\label{pri-22}
\frac{d}{dt}X(t)+&\kappa\|h\|_{N+1}^2+\delta_*\beta C_1^*\|a\|_{M-r}^2+\delta_*(2C_1K_1)^{-1}\|u\|_{L-r}^2\\
\lesssim&(1+\|a,u,h\|_N^2)\|a,u,h\|_{L-r}\|a,u,h\|^2_N.
\end{split}\end{equation}
By the interpolation inequality, it holds that
\begin{equation}
\|a, u, h\|_N^2\leq C\|a, u, h\|_{L-r}^{\frac{2d}{N+d+r-L}}\|a, u, h\|_{N+d}^{\frac{2(N+r-L)}{N+d+r-L}}. \label{INTP}
\end{equation}
Plugging this into \eqref{pri-22} and by assumption (\ref{epsilon1}), one deduces
\begin{align*}
  &\frac{d}{dt}X(t)+\kappa\|h\|_{N+1}^2+\delta_*\beta C_1^*\|a\|_{M-r}^2+\delta_*(2C_1K_1)^{-1}\|u\|_{L-r}^2\\
\leq&C(1+\|a,u,h\|_N^2)\|a,u,h\|_{L-r}\|a, u, h\|_{L-r}^{\frac{2d}{N+d+r-L}}\|a, u, h\|_{N+d}^{\frac{2(N+r-L)}{N+d+r-L}}\\
=&C\left((1+\|a, u, h\|_N^2)\|a, u, h\|_{L-r}^{\frac{d-(N+r-L)}{N+d+r-L}}
\|a, u, h\|_{N+d}^{\frac{2(N+r-L)}{N+d+r-L}}\right)\|a, u, h\|_{L-r}^2\\
=&C\big(1+\mathcal E_N(t)\big)\mathcal E_{L-r}^{\frac{d-(N+r-L)}{2(N+d+r-L)}}(t)\mathcal E_{N+d}^{\frac{N+r-L}{N+d+r-L}}(t)\|a, u, h\|_{L-r}^2\\
\leq&C\varepsilon \|a,u,h\|_{L-r}^2\leq C\varepsilon_1^*\|a,u,h\|_{L-r}^2,
\end{align*}
from which, choosing $\varepsilon_1^*$ such that $C\varepsilon_1^*\leq\delta_{\sharp}
:=\frac12\min\{\kappa,\delta_*\beta C_1^*,\delta_*(2C_1K_1)^{-1}\}$, it follows
\begin{equation}
\begin{split}\label{pri-23}
\frac{d}{dt}X(t)+\delta_\sharp\|a,u,h\|_{L-r}^2\leq0.
\end{split}\end{equation}

It follows from (\ref{M}), \eqref{pri-21}, and (\ref{INTP}) that
$$
X(t)\leq 2\alpha_2\|a, u, h\|_N^2\leq 2\alpha_2C\|a,u,h\|_{L-r}^{\frac{2d}{N+d+r-L}}\Gamma^{\frac{N+r-L}{N+d+r-L}}
$$
and thus
$$
C_\sharp\Gamma^{-\frac{N+r-L}{d}}\left(\frac{X}{\alpha_2}\right)^{\frac{N+d+r-L}{d}}(t)\leq\|a, u, h\|_{L-r}^2(t)
$$
for some positive constant $C_\sharp$. Substituting this into (\ref{pri-23}) yields
$$
\frac{d}{dt}X(t)+\delta_\sharp C_\sharp\Gamma^{-\frac{N+r-L}{d}}\left(\frac{X}{\alpha_2}\right)^{\frac{N+d+r-L}{d}}(t)\leq0,
$$
which implies
\begin{equation*}
  X(t)\leq X_0\left(1+\frac{N+r-L}{d}\frac{\delta_\sharp C_\sharp}{\alpha_2}
  \left(\frac{X_0}{\alpha_2\Gamma}\right)^{\frac{N+r-L}{d}} t\right)^{-\frac{d}{N+r-L}},
\end{equation*}
with $X_0:=X(0)$. Thanks to this, it follows from (\ref{pri-21}) that
\begin{align*}
  \mathcal E_N(t)\leq& \frac{2}{\alpha_1}X(t)\leq
\frac{2}{\alpha_1}X_0\left(1+\frac{N+r-L}{d}\frac{\delta_\sharp C_\sharp}{\alpha_2}
  \left(\frac{X_0}{\alpha_2\Gamma}\right)^{\frac{N+r-L}{d}} t\right)^{-\frac{d}{N+r-L}}  \\
  \leq& \frac{4\alpha_2}{\alpha_1}\mathcal E_{N,0}\left(1+\frac{N+r-L}{d}\frac{\delta_\sharp C_\sharp}{\alpha_2}
  \left(\frac{\alpha_1\mathcal E_{N,0}}{2\alpha_2\Gamma}\right)^{\frac{N+r-L}{d}} t\right)^{-\frac{d}{N+r-L}},
\end{align*}
from which one can easily derive the first conclusion by suitably choosing $C_*$.

We now prove the second conclusion. Denote
$$
Y(t):=\int e(a+1)dx+\sum_{s=1}^{N+d}\left\|\sqrt{\frac{p'}{\rho}}\Lambda^sa\right\|_0^2+\sum_{s=0}^{N+d}\|\sqrt\rho\Lambda^su,\Lambda^sh\|^2_0.
$$
Then, similar to \eqref{2025-7-8-3}, it follows that
\begin{equation}\label{Yt}
  \alpha_1\mathcal E_{N+d}(t)=\alpha_1\|a, u, h\|_{N+d}^2\leq Y(t)\leq \alpha_2\|a, u, h\|_{N+d}^2=\alpha_2\mathcal E_{N+d}(t).
\end{equation}
By Lemma \ref{lemma-pri1} and \eqref{pri-1}, one obtains by the Poincar\'e and Young inequalities that
\begin{equation*}\begin{split}
\frac{d}{dt}Y(t)+2\kappa\|h\|_{N+d+1}^2\lesssim&(1+\|a,u,h\|_3^2)\|a,u,h\|_3\|a,u,h\|^2_{N+d}\\
\lesssim& (1+\|a,u,h\|_N^2)\|a,u,h\|_{N}\|a,u,h\|^2_{N+d},
\end{split}\end{equation*}
for an absolute positive constant $\kappa$. Thanks to this, using the first conclusion and
\eqref{Yt}, one gets
\begin{equation*}
\begin{split}
\frac{d}{dt}Y(t)
\leq& C\left(1+ \mathcal E_{N,0}\right) \mathcal E_{N,0}^\frac12
  \left(1+ \left(\frac{ \mathcal E_{N,0}}{ \Gamma}\right)^{\frac{N+r-L}{d}} t\right)^{-\frac{d}{2(N+r-L)}}Y(t)
\end{split}
\end{equation*}
from which, noticing that $d>2(N+r-L)$, it follows that
\begin{align*}
  Y(t)\leq& Y_0\exp\left\{C \left(1+\mathcal E_{N,0}\right) \mathcal E_{N,0}^\frac12
  \int_0^\infty\left(1+ \left(\frac{ \mathcal E_{N,0}}{ \Gamma}\right)^{\frac{N+r-L}{d}} t\right)^{-\frac{d}{2(N+r-L)}}dt \right\}\\
  =&Y_0\exp\left\{\frac{2C(N+r-L)}{d-2(N+r-L)} \Gamma^{\frac{N+r-L}{d}}\mathcal E_{N,0}^{\frac{d-2(N+r-L)}{2d}} (1+\mathcal E_{N,0})\right\},
\end{align*}
where $Y_0:=Y(0)$. Thanks to this and by (\ref{Yt}), one gets
\begin{align*}
  \mathcal E_{N+d}(t)\leq&\frac{Y(t)}{\alpha_1}\leq\frac{Y_0}{\alpha_1}\exp\left\{C \Gamma^{\frac{N+r-L}{d}}\mathcal E_{N,0}^{\frac{d-2(N+r-L)}{2d}} (1+\mathcal E_{N,0})
  \right\}\\
  \leq&\frac{\alpha_2}{\alpha_1}\mathcal E_{N+d,0} \exp\left\{C \Gamma^{\frac{N+r-L}{d}}\mathcal E_{N,0}^{\frac{d-2(N+r-L)}{2d}} (1+\mathcal E_{N,0})
  \right\},
\end{align*}
from which one can derive easily the second conclusion by suitably choosing $C_*$.
\end{proof}

As a conclusion of this section, we have the following proposition.

\begin{proposition}\label{PROP-KEY}
Assume that $w$ satisfies the Diophantine condition \eqref{Diophantine0} with constants $c$ and $r$. Then, there are positive constants
$\varepsilon_0$ and $C_0$ depending only on $N$, $d$, $L$, $|w|$, $r$, $c$, $\beta,$ $\alpha_1$, $\alpha_2,$ and $Q$, such that
 \begin{eqnarray*}
 && \frac34\leq\rho\leq\frac45\quad\mbox{on }\mathbb T^3\times[0,T], \\
&&\mathcal E_{N}(t)\leq C_0\mathcal E_{N,0}\left[1+
  \left(\frac{\mathcal E_{N,0}}{ \mathcal E_{N+d,0}}\right)^{\frac{N+r-L}{d}} t\right]^{-\frac{d}{N+r-L}}, \\
&&\mathcal E_{N+d}(t)\leq C_0\mathcal E_{N+d,0},
\end{eqnarray*}
for any $t\in[0,T]$, as long as
\begin{equation}
  \mathcal E_{N,0}\left(1+\mathcal E_{N+d,0}^{\frac{2(N+r-L)}{d-2(N+r-L)}}\right)\leq\varepsilon_0.
  \label{ASSUMPTION}
\end{equation}
\end{proposition}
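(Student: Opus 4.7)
The plan is to run a continuity (bootstrap) argument reducing everything to Lemma \ref{lemma-pri4}. I would fix $\Gamma^*:=4C_*\mathcal E_{N+d,0}$ with $C_*$ from Lemma \ref{lemma-pri4}, and take $T^*\in[0,T]$ to be the maximal time such that the hypotheses (\ref{M}) and (\ref{epsilon1}) of that lemma hold with $\Gamma=\Gamma^*$ and $\varepsilon=\varepsilon_1^*$ on $[0,T^*]$. By continuity of the $H^{N+d}$ solution and the smallness of the initial data, $T^*>0$; the goal is to show $T^*=T$ by strictly improving each of the three bootstrap conditions.

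On $[0,T^*]$, Lemma \ref{lemma-pri4} applies and yields the decay bound for $\mathcal E_N(t)$ and the exponential bound for $\mathcal E_{N+d}(t)$. The key algebraic observation I would use is the identity
$$\mathcal E_{N,0}^{(d-2(N+r-L))/(2d)}\mathcal E_{N+d,0}^{(N+r-L)/d}=\Big(\mathcal E_{N,0}\mathcal E_{N+d,0}^{2(N+r-L)/(d-2(N+r-L))}\Big)^{(d-2(N+r-L))/(2d)}\leq \varepsilon_0^{(d-2(N+r-L))/(2d)},$$
the last inequality coming from (\ref{ASSUMPTION}). Inserting this into the exponential in the $\mathcal E_{N+d}$ estimate, the argument of the exponential is at most $C\varepsilon_0^{(d-2(N+r-L))/(2d)}(1+\mathcal E_{N,0})$, which can be made smaller than $\ln 2$ by shrinking $\varepsilon_0$, yielding the strict improvement $\mathcal E_{N+d}(t)\leq 2C_*\mathcal E_{N+d,0}=\Gamma^*/2$.

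Next I would strictly improve the remaining two bootstrap bounds. Using $\mathcal E_N(t)\leq C_*\mathcal E_{N,0}$ (immediate from Lemma \ref{lemma-pri4} at $t=0$) and $\mathcal E_{L-r}\leq\mathcal E_N$, the interpolation quantity in (\ref{epsilon1}) is controlled by $(1+C_*\mathcal E_{N,0})(C_*\mathcal E_{N,0})^{\theta_1}(\Gamma^*)^{\theta_2}$ with $\theta_1=(d-(N+r-L))/(2(N+d+r-L))$ and $\theta_2=(N+r-L)/(N+d+r-L)$. A power-counting calculation analogous to the above, using that $\theta_1\cdot 2(N+r-L)/(d-2(N+r-L))\geq\theta_2$ (which follows from $d>2(N+r-L)$), bounds this by a constant times $\varepsilon_0^{\theta_1}$, hence below $\varepsilon_1^*/2$ for $\varepsilon_0$ small. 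For the density, Sobolev embedding together with $\mathcal E_N(t)\leq C_*\varepsilon_0$ give $\|a\|_{L^\infty}\lesssim\|a\|_N\leq\sqrt{C_*\varepsilon_0}\leq1/4$ for $\varepsilon_0$ small, which yields $\tfrac{3}{4}\leq\rho\leq\tfrac{5}{4}$, strictly stronger than the bootstrap bound $\tfrac{1}{2}<\rho<\tfrac{3}{2}$.

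Once the three bootstrap conditions are strictly improved on $[0,T^*]$, continuity of the $H^{N+d}$-norm forces $T^*=T$; otherwise one could extend $T^*$ slightly, contradicting maximality. The stated conclusions of the proposition then follow directly from Lemma \ref{lemma-pri4} applied on $[0,T]$, by choosing $C_0$ to be a suitable multiple of $C_*$ (and replacing $\Gamma^*$ by $4C_*\mathcal E_{N+d,0}$ in the denominator of the decay rate). The main obstacle is precisely the power-counting algebra, i.e.\ verifying that the peculiar form of (\ref{ASSUMPTION}) simultaneously controls the exponent in the $\mathcal E_{N+d}$ bound and the interpolation expression in (\ref{epsilon1}); the assumption $d>2(N+r-L)$ is essential here, as it ensures the positive exponent $(d-2(N+r-L))/(2d)$ that makes the smallness structural rather than merely notational.
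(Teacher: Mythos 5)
Your proposal is correct and follows essentially the same route as the paper: a continuity (bootstrap) argument whose three conditions match (\ref{M}) and (\ref{epsilon1}), closed by Lemma \ref{lemma-pri4}, with the crucial algebraic step being exactly the regrouping $\mathcal E_{N,0}^{(d-2(N+r-L))/(2d)}\mathcal E_{N+d,0}^{(N+r-L)/d}=\bigl(\mathcal E_{N,0}\mathcal E_{N+d,0}^{2(N+r-L)/(d-2(N+r-L))}\bigr)^{(d-2(N+r-L))/(2d)}$ (and its analogue with exponent $\tfrac{d-2(N+r-L)}{2(N+d+r-L)}$ for the interpolation quantity) so that the hypothesis (\ref{ASSUMPTION}) can be inserted. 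The only cosmetic deviations are your choice $\Gamma^*=4C_*\mathcal E_{N+d,0}$ versus the paper's $\Gamma_0=2eC_*\mathcal E_{N+d,0}$ and the target bound $\ln 2$ versus $1$ for the exponent, which are immaterial.
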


\begin{proof}
By assumption, it is clear that $\mathcal E_{N,0}\leq\varepsilon_0$ and thus it follows from the embedding inequality that
$$
\|\rho_0-1\|_{L^\infty}^2\leq C\|\rho_0-1\|_N^2
\leq C\mathcal E_{N,0}\leq C\varepsilon_0\leq\frac{1}{16},
$$
by choosing $\varepsilon_0$ sufficiently small.
This implies
\begin{equation}
\label{INRHO}
  \frac34\leq\rho_0\leq\frac54\quad\mbox{on }\mathbb T^3.
\end{equation}
Set
\begin{equation}\label{INGamma0}
\Gamma_0:=2eC_*\mathcal E_{N+d,0}>\mathcal E_{N+d,0}.
\end{equation}
Using assumption (\ref{ASSUMPTION}) and since $d>2(N+r-L)$, one deduces
\begin{eqnarray*}
  (1+\mathcal E_{N,0})\mathcal E_{N,0}^{\frac{d-(N+r-L)}{2(N+d+r-L)}}\mathcal E_{N+d,0}^{\frac{N+r-L}{N+d+r-L}}
  =(1+\mathcal E_{N,0})\mathcal E_{N,0}^{\frac{N+r-L}{2(N+d+r-L)}}\left(\mathcal E_{N,0}\mathcal E_{N+d,0}^{\frac{2(N+r-L)}{d-2(N+r-L)}}\right)^{\frac{d-2(N+r-L)}{2(N+d+r-L)}}\\
  \leq(1+\varepsilon_0)\varepsilon_0^{\frac{N+r-L}{2(N+d+r-L)}}\varepsilon_0^{\frac{d-2(N+r-L)}{2(N+d+r-L)}}
  =(1+\varepsilon_0)\varepsilon_0^{\frac{d-(N+r-L)}{2(N+d+r-L)}}\leq\frac{\varepsilon_1^*}{2},
\end{eqnarray*}
by choosing $\varepsilon_0$ sufficiently small. Thus,
\begin{equation}
\label{INPRODUCT}
  (1+\mathcal E_{N,0})\mathcal E_{N,0}^{\frac{d-(N+r-L)}{2(N+d+r-L)}}\mathcal E_{N+d,0}^{\frac{N+r-L}{N+d+r-L}}\leq\frac{\varepsilon_1^*}{2}.
\end{equation}
Denote
\begin{equation*}
\begin{split}
T_*:=
\bigg\{t\in[0,T]\,\bigg|\,&\frac12<\rho<\frac32\mbox{ on }\mathbb T^3\times[0,t], \quad \sup_{0\leq s\leq t}\mathcal E_{N+d}(s)\leq \Gamma_0,\\
 &\text{and } \left.\sup_{0\leq s\leq t}\left[\big(1+\mathcal E_N(s)\big)\mathcal E_{L-r}^{\frac{d-(N+r-L)}{2(N+d+r-L)}}(s)\mathcal E_{N+d}^{\frac{N+r-L}{N+d+r-L}}(s)\right]\leq\varepsilon_1^*\right\}.
\end{split}
\end{equation*}
Due to \eqref{INRHO}, \eqref{INGamma0}, and \eqref{INPRODUCT}, it is clear that $T_*\in(0,T]$.

By the definition of $T_*$, one can apply Lemma \ref{lemma-pri4} to get
\begin{eqnarray}
&&\mathcal E_N(t)\leq C_*\mathcal E_{N,0}\left[1+
  \left(\frac{\mathcal E_{N,0}}{\Gamma_0}\right)^{\frac{N+r-L}{d}} t\right]^{-\frac{d}{N+r-L}},\label{ESTEN0} \\
&&\mathcal E_{N+d}(t)\leq C_*\mathcal E_{N+d,0} \exp\left\{C_{*}\Gamma_0^{\frac{N+r-L}{d}}\mathcal E_{N,0}^{\frac{d-2(N+r-L)}{2d}} (1+\mathcal E_{N,0})\right\},\label{ESTN+D0}
\end{eqnarray}
for any $t\in[0,T_*]$. It is clear from (\ref{ESTEN0}) that
\begin{equation}
\label{ESTEN1}
  \sup_{0\leq t\leq T_*}\mathcal E_N(t)\leq C_*\mathcal E_{N,0}.
\end{equation}
Thanks to this, it follows from the embedding inequality and (\ref{ASSUMPTION}) that
\begin{equation*}
\|\rho-1\|_{L^\infty}^2(t)\leq C\|\rho-1\|_N^2(t)
\leq C\mathcal E_{N}(t)\leq CC_*\mathcal E_{N,0}\leq CC_*\varepsilon_0\leq\frac{1}{16},
\end{equation*}
by choosing $\varepsilon_0$ sufficiently small, for any $t\in[0,T_*]$. Hence,
\begin{equation}
  \label{VERRHO}
  \frac34\leq\rho\leq\frac54\quad\mbox{ on }\mathbb T^3\times[0,T_*].
\end{equation}
Recalling (\ref{INGamma0}) and using assumption (\ref{ASSUMPTION}), one deduces
\begin{align*}
  &C_{*}\Gamma_0^{\frac{N+r-L}{d}}\mathcal E_{N,0}^{\frac{d-2(N+r-L)}{2d}} (1+\mathcal E_{N,0})\\
  =&C_{*}(1+\mathcal E_{N,0})\mathcal E_{N,0}^{\frac{d-2(N+r-L)}{2d}}\left(2eC_*\mathcal E_{N+d,0}\right)^{\frac{N+r-L}{d}}\\
  =&C_{*}\left(2eC_*\right)^{\frac{N+r-L}{d}}(1+\mathcal E_{N,0})
  \left(\mathcal E_{N,0}\mathcal E_{N+d,0}^{\frac{2(N+r-L)}{d-2(N+r-L)}}\right)^{\frac{d-2(N+r-L)}{2d}}\\
  \leq&C_{*}\left(2eC_*\right)^{\frac{N+r-L}{d}}(1+\varepsilon_0)
  \varepsilon_0^{\frac{d-2(N+r-L)}{2d}}\leq1,
\end{align*}
by choosing $\varepsilon_0$ sufficiently small. Thanks to this, it follows from (\ref{ESTN+D0}) that
\begin{equation}
  \sup_{0\leq t\leq T_*}\mathcal E_{N+d}(t)\leq eC_*\mathcal E_{N+d,0}=\frac{\Gamma_0}{2}.
  \label{VEREN+D}
\end{equation}
Combing (\ref{ESTEN1}) with (\ref{VEREN+D}), together with (\ref{ASSUMPTION}), one deduces for any $t\in[0,T_*]$ that
\begin{align*}
  &\big(1+\mathcal E_N(t)\big)\mathcal E_{L-r}^{\frac{d-(N+r-L)}{2(N+d+r-L)}}(t)\mathcal E_{N+d}^{\frac{N+r-L}{N+d+r-L}}(t)\\
  \leq&\big(1+\mathcal E_N(t)\big)\mathcal E_{N}^{\frac{d-(N+r-L)}{2(N+d+r-L)}}(t)\mathcal E_{N+d}^{\frac{N+r-L}{N+d+r-L}}(t)\\
  \leq&\left(1+C_*\mathcal E_{N,0}\right)
  \left(C_*\mathcal E_{N,0}\right)^{\frac{d-(N+r-L)}{2(N+d+r-L)}}\left(eC_*\mathcal E_{N+d,0}\right)^{\frac{N+r-L}{N+d+r-L}}\\
  \leq&\left(eC_*\right)^{\frac32}(1+\mathcal E_{N,0})
  \mathcal E_{N,0}^{\frac{N+r-L}{2(N+d+r-L)}}
  \left(\mathcal E_{N,0}\mathcal E_{N+d,0}^{\frac{2(N+r-L)}{d-2(N+r-L)}}\right)^{\frac{d-2(N+r-L)}{2(N+d+r-L)}}\\
  \leq&\left(eC_*\right)^{\frac32}(1+\varepsilon_0)
  \varepsilon_0^{\frac{d-(N+r-L)}{2(N+d+r-L)}}\leq\frac{\varepsilon_1^*}{2},
\end{align*}
by choosing $\varepsilon_0$ sufficiently small. Thus,
\begin{equation}
  \label{VERPRODUCT}
  \sup_{0\leq t\leq T_*}\left[\big(1+\mathcal E_N(t)\big)\mathcal E_{L-r}^{\frac{d-(N+r-L)}{2(N+d+r-L)}}(t)\mathcal E_{N+d}^{\frac{N+r-L}{N+d+r-L}}(t)\right]\leq\frac{\varepsilon_1^*}{2}.
\end{equation}

Thanks to (\ref{VERRHO}), (\ref{VEREN+D}), and (\ref{VERPRODUCT}), it must have $T_*=T$. Otherwise, if $T_*<T$, then
one can extend to another times $T_{**}\in(T_*,T]$ such that
\begin{equation*}
  \left\{
  \begin{array}{l}
    \frac12<\rho<\frac32\quad\mbox{on }\mathbb T^3\times[0,T_{**}],\quad\displaystyle
    \sup_{0\leq t\leq T_{**}}\mathcal E_{N+d}(t)\leq \Gamma_0,\\
    \displaystyle \sup_{0\leq t\leq T_{**}}\left[\big(1+\mathcal E_N(t)\big)\mathcal E_{L-r}^{\frac{d-(N+r-L)}{2(N+d+r-L)}}(t)\mathcal E_{N+d}^{\frac{N+r-L}{N+d+r-L}}(t)\right]\leq\varepsilon_1^*.
  \end{array}
  \right.
\end{equation*}
By the definition of $T_*$, it then has $T_{**}\leq T_*$, contradicting to $T_{**}>T_*$. This contradiction verifies that $T_*=T$.
Then, the conclusion follows from (\ref{ESTEN0}), (\ref{VERRHO}), (\ref{VEREN+D}), and the definition of $\Gamma_0$, by suitably choosing $C_0$.
\end{proof}

\section{PROOF OF THE MAIN THEOREM}\label{sec4}

This section is devoted to proving Theorem \ref{th:main1}.

\begin{proof}[Proof of Theorem \ref{th:main1}] Based on the solvability of the compressible Euler equations and parabolic system,
for any $(\rho_0,u_0,H_0)\in H^{N+d}(\mathbb T^3)$ with $\inf_{x\in\mathbb T^3}\rho_0>0$ and $\mathrm{div}h_0=0$,
there is a unique local solution $(\rho,u,h)\in C([0,T_0];H^{N+d}(\mathbb T^3))$ with
$\inf_{(x,t)\in\mathbb T^3\times[0,T^\ast]}\rho>0$ to system \eqref{MHD1} for some positive time $T_0$.
By iteratively applying the local well-posedness result, one can
extend the local solution to the maximal time of existence $T_\text{max}$.
Then, by Proposition \ref{PROP-KEY}, it holds that
\begin{eqnarray}
 && \frac34\leq\rho\leq\frac45\quad\mbox{on }\mathbb T^3\times[0,T_\text{max}),\nonumber \\
&&\mathcal E_{N}(t)\leq C_0\mathcal E_{N,0}\left[1+
  \left(\frac{\mathcal E_{N,0}}{ \mathcal E_{N+d,0}}\right)^{\frac{N+r-L}{d}} t\right]^{-\frac{d}{N+r-L}},\nonumber \\
&&\mathcal E_{N+d}(t)\leq C_0\mathcal E_{N+d,0},\label{EN+DAPRI}
\end{eqnarray}
for any $t\in[0,T_\text{max})$.
We claim that $T_\text{max}=\infty$ and the conclusion follows. Otherwise, if $T_\text{max}<\infty$, then it must have
\begin{equation*}
  \varlimsup_{t\rightarrow T_\text{max}}\|a, u, h\|_{N+d}^2(t)=\infty,
\end{equation*}
which contradicts to (\ref{EN+DAPRI}). This completes the proof of Theorem \ref{th:main1}.
\end{proof}

\smallskip
{\bf Acknowledgment.}
This work was supported in part by the the National Natural Science Foundation of China (Grant No. 12371204) and the Key Project of National Natural Science Foundation of China (Grant No. 12131010).

{\bf Conflict of interest statement}: On behalf of all authors, the corresponding author states that there is no conflict of interest.

{\bf Data availability statement}: This paper has no associated data.
\bigskip

\begin{center}

\end{center}
\end{document}